\newtheorem{asum}{Assumption}
\newtheorem{lem}{Lemma}
\newtheorem{thm}{Theorem}
\newtheorem{cor}{Corollary}
\newtheorem{rem}{Remark}
\newtheorem{defn}{Definition}
\newtheorem{pro}{Proposition}
\def\argmin{\mathop{\rm arg\,min}}%
\def\BibTeX{{\rm B\kern-.05em{\sc i\kern-.025em b}\kern-.08em
    T\kern-.1667em\lower.7ex\hbox{E}\kern-.125emX}}
\begin{document}
\title{Data-Driven  Distributionally Robust Mixed-Integer Control through 
     Lifted Control Policy}
\author{Xutao Ma, Chao Ning, \IEEEmembership{Senior Member, IEEE}, Wenli Du, \IEEEmembership{Senior Member, IEEE}, and Yang Shi, \IEEEmembership{Fellow, IEEE}
\thanks{This work was supported in part by the National Natural Science Foundation
of China under Grants 62473256 and 62103264, and in part by the National Natural Science Foundation of China (Basic Science Center Program) under Grant 61988101. (\emph{Corresponding author: Chao Ning})}
\thanks{X. Ma and C. Ning are with Department of Automation, Shanghai
Jiao Tong University, Shanghai 200240, China, and Key Laboratory of System
Control and Information Processing, Ministry of Education of China, Shanghai
200240, China (email: maxutao2022@sjtu.edu.cn; chao.ning@sjtu.edu.cn). }
\thanks{W. Du is with the Key Laboratory of Smart Manufacturing in Energy Chemical Process, Ministry of Education, East China University of Science and Technology, Shanghai 200237, China (e-mail: wldu@ecust.edu.cn).}
\thanks{Y. Shi is with the Department of Mechanical Engineering, University of Victoria, Victoria, BC V8W 3P6, Canada (email: yshi@uvic.ca).}
}

\maketitle

\begin{abstract}
This paper investigates the  {finite-horizon distributionally robust mixed-integer control (DRMIC) of uncertain linear systems.} 
{However, deriving an optimal causal feedback control policy to this DRMIC problem is computationally formidable for most ambiguity sets.}
To address the computational challenge, we propose a novel distributionally robust lifted control policy (DR-LCP) method to derive a high-quality approximate solution to this DRMIC problem for a rich class of Wasserstein metric-based ambiguity sets, including the Wasserstein ambiguity set and its variants. { In theory, we analyze the asymptotic performance and establish a tight non-asymptotic bound of the proposed method. In numerical experiments, the proposed DR-LCP method empirically demonstrates superior performance compared with existing methods in the literature.}
\end{abstract}

\begin{IEEEkeywords}
Distributionally robust control, mixed-integer control, Wasserstein metric, lifted control policy.
\end{IEEEkeywords}

\section{Introduction}\label{intro}
\allowdisplaybreaks
In the context of stochastic control, the probability distribution of uncertain disturbance is assumed to be perfectly known. However, in real-world scenarios, the underlying disturbance distribution is only partially observed from the historical uncertainty data. Therefore, any single estimated distribution based on this partial information inevitably deviates from the true one, resulting in poor control performance or even an unstable system \cite{li2023spatiotemporal,nilim2005robust}. 

{To overcome such drawback of stochastic control, distributionally robust control (DRC) has emerged as a paradigm to hedge against this distribution deviation \cite{van2021data,schuurmans2023general,boskos2020data,yang2020wasserstein,ning2021online,van2015distributionally,mark2020stochastic,al2023distributionally,romao2023distributionally,boskos2023data}}. In the literature,
statistical information such as the moment of different orders can be leveraged to construct a DRC problem \cite{ning2021online,van2015distributionally,yang2018dynamic}. More recently, the Wasserstein metric has been favored to cast a Wasserstein DRC problem thanks to its asymptotic convergence and tractable reformulation \cite{yang2020wasserstein,mohajerin2018data,kuhn2019wasserstein,kim2023distributional}, {and many efficient solution algorithms have been developed \cite{cherukuri2019cooperative,cheramin2022computationally}}. With the merit of hedging against distributional ambiguity, DRC has been successfully applied to various control problems, {such as model predictive control \cite{ning2021online,mcallister2023inherent,li2021distributionally,coulson2021distributionally,wu2022ambiguity}, reinforcement learning \cite{zhao2023minimax,panaganti2023distributionally}, and motion control \cite{hakobyan2021wasserstein,zolanvari2023wasserstein}.}
However, the existing research on DRC considers only continuous controls in the system, neglecting the incorporation of integer controls, which model critical control actions in practice, such as the on/off decisions of machines in the process industry \cite{rawlings2017model}.

It is widely acknowledged that deriving an optimal control policy is computationally prohibitive for most DRC problems, due to the need to optimize over the infinite-dimensional space of all the admissible control policies. To address this computational challenge, approximate solution methods have been developed in previous research. Using a parameterized class of feedback control policies is a frequently adopted method to address dynamic control problems \cite{calafiore2009affine,ning2020transformation}. In reference \cite{van2015distributionally}, an affine control policy (ACP) was employed to address moment-based DRC problems. However, the ACP excludes nonlinear control policies and typically leads to over-conservative control actions. For Wasserstein DRC problems, references \cite{yang2020wasserstein} and \cite{kim2023distributional} adopted a Wasserstein penalty relaxation and derived control policies for linear-quadratic Wasserstein DRC problems. { However, their approach can only be applied to quadratic value
function cases, while this condition typically does not hold for mixed-integer control problems.} Therefore, there arises a pressing need to develop a unified approximate solution method that derives high-quality policies for both continuous and integer controls in DRC problems.

{To fill the research gaps, we propose a DR-LCP method by using the lifted control policy (LCP) to solve finite-horizon mixed-integer DRC problems under a class of Wasserstein-type ambiguity sets. 
The LCP was introduced by \cite{georghiou2015generalized} and \cite{bertsimas2018binary}, and the recent work \cite{feng2021multistagea} used the LCP to solve distributionally robust optimization (DRO) problems under relaxed Wasserstein ambiguity sets.  
Going beyond the previous research, we conduct an asymptotic performance analysis and establish a tight non-asymptotic performance bound of the proposed method, which can help us understand the performance of the proposed LCP method theoretically.
Moreover, we develop an equivalent reformulation technique to derive the best LCP of the DRC problems under several Wasserstein-type ambiguity sets (including the standard Wasserstein ambiguity set) without any relaxation.
Numerical experiments on a classic inventory control problem fully verify the superior performance of the proposed DR-LCP method compared with existing methods in the literature. }

{The major contributions of this paper are summarized as
follows.
\begin{itemize}
    \item We propose a novel DR-LCP method to use LCP to solve DRC problems under a class of Wasserstein-type ambiguity sets.
    \item Theoretically, we conduct an asymptotic performance analysis and establish a tight non-asymptotic performance bound of the proposed DR-LCP method.
\end{itemize}}

The rest of the paper is structured as follows. Section \ref{prob} presents the DRMIC problem considered in this paper. The LCP formulation is introduced in Section \ref{lcp}. { We then conduct asymptotic and non-asymptotic analysis of the proposed method in Section {\ref{thero_anal}}.} In Section \ref{refor}, we develop a solution methodology to find the optimal LCP solution to the DRMIC problem for a class of Wasserstein metric-based ambiguity sets. Subsequently, numerical experiments on an inventory control problem are conducted in Section \ref{case}. Conclusions are drawn in Section \ref{concl}.

\noindent\textbf{Notation:}  Throughout the paper, {bold symbols are used exclusively to denote vectors and matrices.} $\mathcal{T}$ is the control horizon. 
$\boldsymbol{\xi}=(\boldsymbol{\xi}_1^T,\cdots,\boldsymbol{\xi}_{\mathcal{T}}^T)^T$ is the overall uncertain disturbance, where
$\boldsymbol{\xi}_t=({\xi}_{t,1},\cdots,{\xi}_{t,n_\xi})^T$ corresponds to $n_\xi$ dimensional disturbance 
realized in time $t$ and ${\xi}_{t,i}$ is the $i$th dimension of $\boldsymbol{\xi}_t$. { $\boldsymbol{\xi}_{[t]}=(\boldsymbol{\xi}_1,\cdots,\boldsymbol{\xi}_t)^T$ represents 
disturbance up to time $t$.}   
$\varXi$, $\varXi_{[t]}$, $\varXi_t$, and $\varXi_{t,i}$ are support of $\boldsymbol{\xi}$, $\boldsymbol{\xi}_{[t]}$, 
$\boldsymbol{\xi}_t$, and ${\xi}_{t,i}$, respectively. The disturbance and support with superscript 
$\ast$ stand for the corresponding lifted versions. $\| \cdot \|$ represents 1-norm and $\| \cdot \|_{\ast}$ is 
its dual norm, \emph{i.e.}, $\infty$-norm. conv$(\cdot)$ is the operator of taking the convex hull. $[N]$ represents the set $\{ 1,\cdots,N \}$.

\section{Problem Statement}
\label{prob}
\allowdisplaybreaks

We consider an uncertain linear system with both continuous and integer controls, given as follows.
\begin{equation}
\label{sys}
{ \boldsymbol{x}_{t+1}=\boldsymbol{A}_{t}\boldsymbol{x}_{t}+\boldsymbol{B}_{t}\boldsymbol{u}_{t}+\boldsymbol{C}_{t}\boldsymbol{\gamma}_{t}+
\boldsymbol{D}_{t}\boldsymbol{\xi}_{t}},
\end{equation}
where $\boldsymbol{x}_t\in \mathbb{R}^{n_x}$ is the state of the system, { $\boldsymbol{u}_t\in \mathbb{R}^{n_u}$ is the continuous control input, $\boldsymbol{\gamma}_t\in {\mathbb{Z}}^{n_\gamma}$ is the integer control input}, and $\boldsymbol{\xi}_{t}\in \varXi_t\subset \mathbb{R}^{n_\xi}$ is the uncertainty in the form of disturbance. System (\ref{sys}) is subject to the linear constraint
\begin{equation}
\begin{gathered}
     \sum_{t=1}^{\mathcal{T}} 
\tilde{\boldsymbol{A}}_{t}\boldsymbol{x}_t+\tilde{\boldsymbol{B}}_{t}
   \boldsymbol{u}_t+
\tilde{\boldsymbol{C}}_{t}\boldsymbol{\gamma}_t+ 
    \tilde{\boldsymbol{D}}_{t} \boldsymbol{\xi}_{t}\leq \boldsymbol{q}, \forall \boldsymbol{\xi} \in \varXi,
\end{gathered}
\label{hard}
\end{equation}
where $\tilde{\boldsymbol{A}}_{t},\tilde{\boldsymbol{B}}_{t},\tilde{\boldsymbol{C}}_{t},\tilde{\boldsymbol{D}}_{t}$ are the coefficient matrices. 
{Concretely, constraint (\ref{hard}) can represent safety requirements, such as the need for the state to remain within a polyhedral safety set under any uncertainty realizations.}

{ In this paper, we assume that the uncertainty support $\varXi$ is a hyperrectangle, which is crucial for the reformulation of the proposed method.}

{ 
\begin{asum}\label{supp_assum}
    The uncertainty support $\varXi$ is a hyperrectangle.
\end{asum}
}
To handle the disturbance of the system, modeling of the joint distribution of $\boldsymbol{\xi}=(\boldsymbol{\xi}_1^T,\cdots,\boldsymbol{\xi}_{\mathcal{T}}^T)^T$ is necessary. However, the true joint distribution of the disturbance is not perfectly known and is only partially observed from the
data $\widehat{\boldsymbol{\xi}}^s,s=1,\cdots,N$. Therefore, we can leverage the information in the data to construct a data-driven ambiguity set $\mathscr{B}(\widehat{\boldsymbol{\xi}}^1,\cdots,\widehat{\boldsymbol{\xi}}^N)$.

Based on the ambiguity set $\mathscr{B}$, we aim to find a causal  disturbance feedback policy $\boldsymbol{\pi}(\cdot)=(\pi^u,\pi^{\gamma})=(\boldsymbol{u}_1^\pi(\cdot),\cdots,\boldsymbol{u}_{\mathcal{T}}^\pi(\cdot),\boldsymbol{\gamma}_{1}^\pi(\cdot),\cdots,\boldsymbol{\gamma}_{\mathcal{T}}^\pi(\cdot))$, with $\boldsymbol{u}_t^\pi(\cdot):\varXi_{[t]}\to \mathbb{R}^{n_u}$ and $\boldsymbol{\gamma}_t^\pi(\cdot):\varXi_{[t]}\to {\mathbb{Z}}^{n_{\gamma}}$ satisfing (\ref{hard}) and minimizing the following {finite-horizon} discounted cost function $J(\boldsymbol{x},\pi)$, which is given by
\begin{equation}
\label{cost}
    J(\boldsymbol{x},\boldsymbol{\pi})=\max_{\mathbb{P}\in \mathscr{B}}\
    \mathbb{E}_{\boldsymbol{\xi}\sim\mathbb{P}}\Bigg[\sum_{t=1}^\mathcal{T} 
    \alpha^t c_t(\boldsymbol{x}_t,\boldsymbol{u}_{t}^\pi,\boldsymbol{\gamma}_t^\pi)\ \Bigg|\ \boldsymbol{x}_0=\boldsymbol{x}
    \Bigg]   
\end{equation}
where $\alpha^t\in [0,1]$ is a discount factor.

Therefore, combining (\ref{sys})-(\ref{cost}), we can formulate the DRMIC problem as 
\begin{equation}
\tag{DRMIC}
\label{drsc}
\begin{gathered}
\min_{\boldsymbol{\pi}\in \Pi}\ \ J(\boldsymbol{x},\boldsymbol{\pi})\ \
    \text{s.t. } (\ref{hard})-(\ref{cost})
\end{gathered}
\end{equation}
where $\Pi$ represents a set of causal disturbance feedback policies.

{ We further define the optimal value of (\mbox{\ref{drsc}}) as $J^\ast (\Pi)$, and for $\Pi=\Pi^u\otimes \Pi^\gamma$, we write the optimal value as $J^\ast (\Pi^u, \Pi^\gamma)$, where $\Pi^u$ and $ \Pi^\gamma$ are spaces of continuous and integer control policies.}

The worst-case expectation term $\max_{\mathbb{P}\in \mathscr{B}}
\mathbb{E}_{\boldsymbol{\xi}\sim\mathbb{P}}\{ \cdot \}$ and the mixed-integer control pose formidable computational challenges in solving the DRMIC problem (\ref{drsc}), and the exact solution method has not been established to date for most ambiguity sets. Therefore, we propose a novel DR-LCP method to derive an approximate solution by restricting causal control policies to the family of LCPs.

\section{Lifted Control Policy}
\label{lcp}
In this section, we begin by introducing the lifting function, which maps the disturbance into the lifted disturbance. Subsequently, we develop LCP for the continuous and integer controllers as affine functions of the continuous and integer parts of the lifted past disturbance, respectively.


{ Before introducing the lifting function, we define some notations regarding the uncertainty support. By Assumption {\ref{supp_assum}}, the uncertainty support $\varXi$ can be expressed as $\varXi=\{ \boldsymbol{\xi} | \boldsymbol{l}\leq \boldsymbol{\xi} \leq \boldsymbol{v}  \}$, where $\boldsymbol{l}$ and $\boldsymbol{v}$ are the lower and upper bounds, respectively. Similarly, 
$\varXi_{[t]}$, $\varXi_t$, and $\varXi_{t,i}$ are hyperrectangles with lower bound $\boldsymbol{l}_{[t]}$, 
$\boldsymbol{l}_t$, ${l}_{t,i}$ and upper bound $\boldsymbol{v}_{[t]}$, 
$\boldsymbol{v}_t$, ${v}_{t,i}$, respectively.}

\subsection{Lifting of Disturbance}
We first introduce the lifting function for each dimension ${\xi}_{t,i}, t\in [\mathcal{T}], i\in [n_\xi]$ of the disturbance.
\begin{defn}
\label{def1}
    Given $p_{t,i}-1$ breakpoints $w_{t,i,j},j\in [p_{t,i}-1]$ satisfying
    \begin{equation}
       {  {l}_{t,i} =w_{t,i,0}\leq w_{t,i,1}\leq \cdots \leq w_{t,i,(p_{t,i}-1)}\leq w_{t,i,p_{t,i}}= {v}_{t,i},} \nonumber
    \end{equation}
    a lifting function $G_{t,i}(\cdot)$ for dimension ${\xi}_{t,i}$ is a one-to-one mapping from the support $\varXi_{t,i}$ into the lifted support $\varXi_{t,i}^\ast$ as
    \begin{equation}
                \begin{aligned}
        G_{t,i}:\varXi_{t,i}&\rightarrow\varXi_{t,i}^{\ast};\ 
    {\xi}_{t,i}\rightarrow G_{t,i}({\xi}_{t,i})=\boldsymbol{\xi}_{t,i}^{\ast}
    \\:=&(V_{t,i,1},\cdots,V_{t,i,p_{t,i}},Q_{t,i,1},\cdots,Q_{t,i,p_{t,i}})^T\\
    =&(\boldsymbol{V}_{t,i}^T,\boldsymbol{Q}_{t,i}^T)^T,
\end{aligned}
    \end{equation}
where 
\setlength{\arraycolsep}{0.5pt}
\begin{equation}
    {  V_{t,i,j}=
    \begin{cases}
    \text{min}\{{\xi}_{t,i},w_{t,i,1}\},\hspace{-4pt} & j=1\\
    \text{max}\{  \text{min}\{ {\xi}_{t,i},w_{t,i,j} \}-w_{t,i,j-1},0 \},\hspace{-4pt}&
        j=2,\cdots,p_{t,i},
    \end{cases}   }
\end{equation}
and for $j\in[p_{t,i}-1]$,
\begin{equation}
    Q_{t,i,j}=\left\{  \begin{array}{ll}
    0, \hspace{10pt} &  {\xi}_{t,i}< w_{t,i,j} \\
     1, & {\xi}_{t,i}\geq w_{t,i,j }.
\end{array}  \right.
\end{equation}
\end{defn}

\begin{rem}
In Definition \ref{def1}, $V_{t,i,j}$ changes continuously with respect to ${\xi}_{t,i}$ and $Q_{t,i,j}$ is a binary-valued indicator of whether ${\xi}_{t,i}$ exceeds the breakpoint $w_{t,i,j}$. This lifting function allows us to design continuous and integer control policies according to the values of $V_{t,i,j}$ and $Q_{t,i,j}$, respectively.
\end{rem}

In an inverse direction, we can construct a recovery transform $\boldsymbol{R}_{t,i}$ defined as
\begin{equation}
\boldsymbol{R}_{t,i}\boldsymbol{\xi}_{t,i}^{\ast}=\sum_{j=1}^{p_{t,i}}V_{t,i,j}={\xi}_{t,i},
\end{equation}
 to map $\boldsymbol{\xi}_{t,i}^{\ast}$ back to ${\xi}_{t,i}$.

With respect to the lifted support $\varXi_{t,i}^\ast$, previous research mainly focuses on the structure of its closed convex hull, while the analysis of the lifted support itself is ignored \cite{georghiou2015generalized,bertsimas2018binary}. 
To fill this gap, we investigate the structure of the lifted support $\varXi_{t,i}^\ast$ in Proposition \ref{prop}, which is instrumental in developing the DR-LCP method to find the optimal LCP.

\begin{pro}
\label{prop}
Let $G_{t,i}(\cdot)$ and $\varXi_{t,i}^{\ast}$ be given in Definition \ref{def1}. Then,
the lifted support $\varXi_{t,i}^{\ast}$ is the union of $p_{t,i}$ line segments $K_{t,i,j}^{\ast},j=1,\cdots,p_{t,i}$ with the following analytical formulation:
\begin{equation}
\label{supp}
  \begin{gathered}
    K_{t,i,j}^{\ast}=\big\{ \left. \lambda G_{t,i}\left( w_{t,i,(j-1)} \right) +(1-\lambda) G_{t,i}^{-}\left(  w_{t,i,j} \right)\ \right|\\ \lambda\in [0,1) \big\},
     \emph{ for }  j\in [p_{t,i}-1],\\
    K_{t,i,p_{t,i}}^{\ast}=\big\{ \left. \lambda G_{t,i}\left( w_{t,i,(p_{t,i}-1)} \right) +(1-\lambda) G_{t,i}^{-}\left(  w_{t,i,p_{t,i}} \right)\right|\\ \lambda\in [0,1] \big\},
\end{gathered}  
\end{equation}
{where $G_{t,i}^{-}(  w_{t,i,j}):=\lim_{\xi\nearrow w_{t,i,j}}G_{t,i}(\xi)$ is the left limit of lifting function $G_{t,i}$ at point $ w_{t,i,j}$.}
\end{pro}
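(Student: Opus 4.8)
The plan is to fix an arbitrary pair $(t,i)$ and, suppressing these subscripts for readability (so that $G=G_{t,i}$, $p=p_{t,i}$, $w_j=w_{t,i,j}$, and $V_j,Q_j$ denote the lifted coordinates), to analyze the scalar lifting map directly by partitioning its domain at the breakpoints. Using $w_0=l\le w_1\le\cdots\le w_p=v$, I would decompose the support interval as $\varXi_{t,i}=\bigcup_{j=1}^{p}J_j$, where $J_j=[w_{j-1},w_j)$ for $j\in[p-1]$ and $J_p=[w_{p-1},w_p]$. Since $\varXi_{t,i}^{\ast}=G(\varXi_{t,i})$ and this partition is disjoint, it suffices to compute $G(J_j)$ on each piece and then take the union.

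The core fact I would establish is that $G$ is affine on each $J_j$. Reading Definition \ref{def1} for $\xi\in J_j$, every indicator coordinate is constant on $J_j$: one has $Q_k=1$ for $k<j$ and $Q_k=0$ for $k\ge j$, because $w_k\le w_{j-1}\le\xi$ holds exactly when $k\le j-1$, whereas $\xi<w_j\le w_k$ forces $Q_k=0$ for $k\ge j$. Among the continuous coordinates, the first $j-1$ are saturated and the last $p-j$ vanish, so only $V_j$ varies, and it does so affinely ($V_j=\xi-w_{j-1}$ for $j\ge2$, and $V_1=\xi$). Consequently $G|_{J_j}$ is an affine injection of an interval whose single non-constant entry is $V_j$, so its image $G(J_j)$ is a line segment determined by the images of the endpoints of $J_j$.

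It then remains to match these segments to the sets $K_{t,i,j}^{\ast}$ in \eqref{supp}. The left endpoint $\xi=w_{j-1}$ is attained and maps to $G(w_{j-1})$, while the right end is only approached as $\xi\nearrow w_j$, producing the left limit $G^{-}(w_j)$. For $j\in[p-1]$ this limit is genuinely not attained, precisely because the indicator $Q_j$ jumps from $0$ to $1$ at $w_j$, so $G^{-}(w_j)$ carries $Q_j=0$ while the actual value $G(w_j)$ carries $Q_j=1$; this discontinuity is exactly what makes the intermediate segments half-open and pairwise disjoint, reproducing the half-open structure of \eqref{supp}. For the terminal piece $J_p=[w_{p-1},w_p]$ no indicator is associated with $w_p$, hence $G^{-}(w_p)=G(w_p)$ is attained and $K_{t,i,p}^{\ast}$ is closed. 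Taking the union over $j$ and using $\bigcup_{j=1}^{p}J_j=\varXi_{t,i}$ gives $\varXi_{t,i}^{\ast}=G(\varXi_{t,i})=\bigcup_{j=1}^{p}G(J_j)=\bigcup_{j=1}^{p}K_{t,i,j}^{\ast}$, as claimed.

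The hard part will be the careful bookkeeping of the open versus closed endpoints, which is governed entirely by the jump discontinuities of the indicator coordinates at the breakpoints; in particular, verifying that each left-limit point $G^{-}(w_j)$ with $j<p$ lies outside the image (no $\xi$ attains it, since saturating $V_j$ requires $\xi\ge w_j$, which already forces $Q_j=1$) is the one step that is not purely mechanical. A secondary technical point is the degenerate case of coincident breakpoints $w_{j-1}=w_j$, where $J_j$ is empty and the corresponding segment collapses; this must be acknowledged but does not alter the union.
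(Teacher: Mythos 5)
Your proposal is correct and follows essentially the same route as the paper's proof: partition $\varXi_{t,i}$ at the breakpoints into the half-open intervals $[w_{t,i,j-1},w_{t,i,j})$ (closed for the last piece), observe that $G_{t,i}$ is affine on each piece, and conclude that each image is a line segment whose union is $\varXi_{t,i}^{\ast}$. Your extra endpoint bookkeeping is sound and in fact sharper than the statement itself: since $w_{t,i,j-1}$ is attained and $w_{t,i,j}$ is not, the image of the $j$th interval contains $G_{t,i}(w_{t,i,j-1})$ and omits $G_{t,i}^{-}(w_{t,i,j})$, so the parametrization in (\ref{supp}) should read $\lambda\in(0,1]$ rather than $\lambda\in[0,1)$ --- a detail the paper's one-line conclusion glosses over and which is harmless downstream because Lemma \ref{lemma} only takes suprema of continuous linear functions over these segments.
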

\begin{proof}
    Based on the $p_{t,i}-1$ breakpoints, the support $\varXi_{t,i}$ is partitioned into $p_{t,i}$ intervals $K_{t,i,j},j=1,\cdots,p_{t,i}$ as follows.
\begin{equation}
\begin{gathered}
    \varXi_{t,i}=\cup_{j=1}^{p_{t,i}}K_{t,i,j},\\
    K_{t,i,j}=\left[w_{t,i,(j-1)},w_{t,i,j}\right), \text{for } j\in [p_{t,i}-1],\\
    K_{t,i,p_{t,i}}=\left[w_{t,i,(p_{t,i}-1)},w_{t,i,p_{t,i}}\right].
\end{gathered}
\end{equation}

Therefore, by defining $K_{t,i,j}^\ast=G_{t,i}(K_{t,i,j})$, we have
\begin{equation}
    \varXi_{t,i}^\ast=\cup_{j=1}^{p_{t,i}}G_{t,i}(K_{t,i,j})=\cup_{j=1}^{p_{t,i}}K_{t,i,j}^\ast.
\end{equation}

Noting that the lifting function $G_{t,i}$ for dimension ${\xi}_{t,i}$ is linear on each interval $K_{t,i,j}$, so the segment expression (\ref{supp}) is naturally derived.
\end{proof}

By stacking up each lifted dimension, we can define the lifting function of the past disturbance $\boldsymbol{\xi}_{[t]}$ up to time $t$ below.
\begin{defn}\label{def_3}
    Let the lifting function $G_{t,i}$ be well-defined for each dimension of the disturbance. The lifting function $G_{[t]}$ maps the past disturbance $\boldsymbol{\xi}_{[t]}$ into lifted past disturbance $\boldsymbol{\xi}_{[t]}^\ast$ as follows.
\begin{align}
    G_{[t]}:\varXi_{[t]}\rightarrow\varXi_{[t]}^{\ast};&\
    \boldsymbol{\xi}_{[t]}{\rightarrow}G_{[t]}(\boldsymbol{\xi}_{[t]})=\boldsymbol{\xi}_{[t]}^{\ast}\nonumber\\
    :=(&G_{1,1}({\xi}_{1,1}),\cdots,G_{1,n_\xi}({\xi}_{1,n_\xi}),\nonumber\\&G_{2,1}({\xi}_{2,1}),\cdots,G_{t,n_\xi}({\xi}_{t,n_\xi}))^T\\
    =(&\boldsymbol{V}_{1,1}^T,\boldsymbol{Q}_{1,1}^T,\cdots,\boldsymbol{V}_{1,n_\xi}^T,\boldsymbol{Q}_{1,n_\xi}^T,\nonumber\\&\boldsymbol{V}_{2,1}^T,\boldsymbol{Q}_{2,1}^T,\cdots,\boldsymbol{V}_{t,n_\xi}^T,\boldsymbol{Q}_{t,n_\xi}^T)^T.\nonumber
\end{align}
\end{defn}

Therefore, the lifted past disturbance $\boldsymbol{\xi}_{[t]}^{\ast}$ can be expressed as a projection of the lifted overall disturbance $\boldsymbol{\xi}^{\ast}=G_{[\mathcal{T}]}(\boldsymbol{\xi}_{[\mathcal{T}]})$. Specifically, this projection is given by
\begin{equation} \label{proj} \boldsymbol{\xi}_{[t]}^{\ast}=\boldsymbol{P}_t\boldsymbol{\xi}^{\ast}, \end{equation}
where $\boldsymbol{P}_t=(\boldsymbol{I},\boldsymbol{0})$.

Accordingly, the recovery transform $\boldsymbol{R}$ for the lifted overall disturbance $\boldsymbol{\xi}^{\ast}$ is constructed as
\begin{equation}
    \boldsymbol{R}=\text{diag}\left( \boldsymbol{R}_{1,1},\cdots,\boldsymbol{R}_{1,n_\xi},\boldsymbol{R}_{2,1},\cdots,\boldsymbol{R}_{\mathcal{T},n_\xi} \right),
\end{equation}
and the relation 
\begin{equation}
\label{recover}
    \boldsymbol{R}\boldsymbol{\xi}^{\ast}=\boldsymbol{\xi}
\end{equation}
can be easily verified.

For the convenience of constructing the LCP, we further define the continuous and integer parts of the lifted past disturbance $\boldsymbol{\xi}_{[t]}^{\ast}$ as
\begin{gather}                  
(\boldsymbol{V}_{1,1}^T,,\cdots,\boldsymbol{V}_{1,n_\xi}^T,\boldsymbol{V}_{2,1}^T,,\cdots,\boldsymbol{V}_{t,n_\xi}^T)^T=\boldsymbol{P}_{V_t}\boldsymbol{\xi}_{[t]}^{\ast},\\
(\boldsymbol{Q}_{1,1}^T,,\cdots,\boldsymbol{Q}_{1,n_\xi}^T,\boldsymbol{Q}_{2,1}^T,,\cdots,\boldsymbol{Q}_{t,n_\xi}^T)^T=\boldsymbol{P}_{Q_t}\boldsymbol{\xi}_{[t]}^{\ast},
\end{gather}
and we denote the dimension of these two vectors as $n_{V_t}=\sum_{t'=1}^t \sum_{i=1}^{n_\xi}{p_{t',i}}$ and $n_{Q_t}=\sum_{t'=1}^t \sum_{i=1}^{n_\xi}[{p_{t',i}-1}]$.

\subsection{Continous and Integer Lifted Control Policy}
Based on the lifting function, the LCP for continuous control $\boldsymbol{u}_t^\pi$ is defined as an affine function of the continuous part of the lifted past disturbance $\boldsymbol{\xi}_{[t]}^{\ast}$ \cite{georghiou2015generalized}, \emph{i.e.},
\begin{equation}
\label{conti}
    \begin{gathered}
       { \boldsymbol{u}_{t}^\pi(\boldsymbol{\xi}_{[t]})=\boldsymbol{Y}_t\boldsymbol{P}_{V_t}G_{[t]}(\boldsymbol{\xi}_{[t]})+\boldsymbol{y}_t^0=\boldsymbol{Y}_t\boldsymbol{P}_{V_t}\boldsymbol{\xi}_{[t]}^{\ast}+\boldsymbol{y}_t^0}\\
=\boldsymbol{Y}_t\boldsymbol{P}_{V_t}\boldsymbol{P}_t\boldsymbol{\xi}^{\ast}+\boldsymbol{y}_t^0,
\end{gathered}
\end{equation}
where $\boldsymbol{Y}_t\in \mathbb{R}^{n_{u}\times n_{V_t}}$ and $\boldsymbol{y}_t^0\in \mathbb{R}^{n_{u}}$.

Similarly, the LCP for integer control $\boldsymbol{\gamma}_t^\pi$ is defined as the affine function of the integer part of $\boldsymbol{\xi}_{[t]}^{\ast}$ \cite{bertsimas2018binary}, \emph{i.e.},
\begin{equation}
\label{binar}
\begin{gathered}
    { \boldsymbol{\gamma}_{t}^\pi(\boldsymbol{\xi}_{[t]})=\boldsymbol{Z}_t\boldsymbol{P}_{Q_t} G_{[t]}(\boldsymbol{\xi}_{[t]})+\boldsymbol{z}_t^0=\boldsymbol{Z}_t\boldsymbol{P}_{Q_t}\boldsymbol{\xi}_{[t]}^{\ast}+\boldsymbol{z}_t^0}\\
=\boldsymbol{Z}_t\boldsymbol{P}_{Q_t}\boldsymbol{P}_t\boldsymbol{\xi}^{\ast}+\boldsymbol{z}_t^0,
    \end{gathered}
\end{equation}
where $\boldsymbol{Z}_t\in \mathbb{Z}^{n_{\gamma}\times n_{Q_t}}$ and $\boldsymbol{z}_t^0\in \mathbb{Z}^{n_{\gamma}}$. 

\begin{rem}
    By the construction of the LCP, if the segment number $p_{t,i}=1$ for all dimensions of the disturbance, i.e., no breakpoint being used, the LCP degenerates to the ACP. If the segment number $p_{t,i}>1$, the ACP is a subset of the LCP, and due to the nonlinearity of the lifting function, a family of nonlinear policies for the continuous and integer controls are also included in the LCP. Moreover, using more breakpoints can enrich the content of the LCP, but it will result in a heavier computational burden since optimization over a larger family of control policies is involved.
\end{rem}

{ Based on expressions ({\ref{conti}}) and ({\ref{binar}}), we can define the lifted policy space.}
{
\begin{defn}[Lifted Control Policy Space]\label{def_LCPS}
 With breakpoints $\boldsymbol{w}=(w_{t,i,j},t\in [\mathcal{T}],i\in [n_\xi],j\in [p_{t,i}-1])$, the continuous and integer lifted policy spaces $\Pi^{\boldsymbol{u}}({\boldsymbol{w}})$ and ${\Pi}^{\boldsymbol{\gamma}}({\boldsymbol{w}})$ are
    \begin{gather}
    { \begin{aligned}
        \Pi^{{u}}&({\boldsymbol{w}})=\{\pi^u=  (\boldsymbol{u}_1^\pi(\cdot),\cdots,\boldsymbol{u}_{\mathcal{T}}^\pi(\cdot)  ) |\boldsymbol{u}_t^\pi(\boldsymbol{\xi}_{[t]})\\&=\boldsymbol{Y}_t\boldsymbol{P}_{V_t}G_{[t]}(\boldsymbol{\xi}_{[t]})+\boldsymbol{y}_t^0,\boldsymbol{Y}_t\in \mathbb{R}^{n_{u}\times n_{V_t}},\boldsymbol{y}_t^0\in \mathbb{R}^{n_{u}}\},
    \end{aligned}}\\
     { \begin{aligned}
        \Pi^{{\gamma}}&({\boldsymbol{w}})=\{ \pi^{\gamma}= (\boldsymbol{\gamma}_{1}^\pi(\cdot),\cdots,\boldsymbol{\gamma}_{\mathcal{T}}^\pi(\cdot) ) | 
        \boldsymbol{\gamma}_{t}^\pi(\boldsymbol{\xi}_{[t]})\\&=\boldsymbol{Z}_t\boldsymbol{P}_{Q_t}G_{[t]}(\boldsymbol{\xi}_{[t]})+\boldsymbol{z}_t^0,\boldsymbol{Z}_t\in \mathbb{Z}^{n_{\gamma}\times n_{Q_t}},\boldsymbol{z}_t^0\in \mathbb{Z}^{n_{\gamma}}\}.
    \end{aligned}}
    \end{gather}
\end{defn}}

{ By optimizing over the LCP, the (\mbox{\ref{drsc}}) problem becomes computing $J^\ast ( \Pi^{{u}}({\boldsymbol{w}}),\Pi^{{\gamma}}({\boldsymbol{w}}))$. To understand the performance of the LCP, we conduct a theoretical analysis in the next section. Subsequently, in Section \mbox{\ref{refor}}, we develop the methodology to compute $J^\ast ( \Pi^{{u}}({\boldsymbol{w}}),\Pi^{{\gamma}}({\boldsymbol{w}}))$ and the associated best lifted controller.}

{
\section{Performance Analysis}
\label{thero_anal}
By Definition \ref{def_LCPS}, the LCP space is determined by the breakpoints $\boldsymbol{w}$, and when the number of breakpoints increases, more control policies are contained in the LCP space. Therefore, it can be expected that using more breakpoints will lead to better performance. 

In this section, we will analyze this breakpoint-performance relation and answer the following two questions:
\begin{enumerate}
    \item What is the asymptotic performance of the LCP when the number of breakpoints goes to infinity?
    \item When finitely many breakpoints are used, what is the tightest bound for the disparity between this non-asymptotic performance and the asymptotic performance?
\end{enumerate}

To quantify the effectiveness of the breakpoints in segmenting the support, we first define the largest segment $\epsilon(\boldsymbol{w})$ induced by breakpoints $\boldsymbol{w}$ as follows.
\begin{defn}\label{def_seg_larg}
    The largest segment induced by breakpoints $\boldsymbol{w}$ is defined as
    \begin{equation}
        \epsilon(\boldsymbol{w})=\max_{t\in [\mathcal{T}],i\in [n_\xi],j\in[p_{i,j}]}\{ w_{t,i,j}-w_{t,i,j-1}\}.
    \end{equation}
\end{defn}

With this notion, the asymptotic performance naturally refers to the performance when $ \epsilon(\boldsymbol{w})$ goes to 0.

We then define the distance between continuous control policies in Definition \ref{def_distance}.
\begin{defn}[Distance between continuous control policies]\label{def_distance}
    Given two continuous policies $\pi^u_1$ and $\pi^u_2$, their distance is defined as
    \begin{equation}
        d(\pi^u_1,\pi^u_2)=\sum_{t=1}^{\mathcal{T}}\sum_{j=1}^{n_u} \| {u}^{\pi_1}_{t,j}-{u}^{\pi_2}_{t,j}  \|_{\infty},
    \end{equation}
    where $\|  \cdot \|_\infty$ is the supremum norm over function space and $\boldsymbol{u}_t^\pi=(u_{t,1}^\pi,\cdots,u_{t,n_{u}}^\pi)$.
\end{defn}

\subsection{Asymptotic Performance Analysis}
\label{as_section}
To analyze the asymptotic performance, we first characterize the ``limit LCP space" when $\epsilon(\boldsymbol{w})$ goes to 0. To this end, we define additive control policy space in Definition \ref{def_add_con}, and in Proposition \ref{thm_asy}, we prove that the ``limit LCP space" coincides with this additive control policy space.

\begin{defn}[Additive Control Policy Space]\label{def_add_con}
The additive continuous control policy space $\mathfrak{N}^{u}$ and additive integer control policy space $\mathfrak{N}^{\gamma}$ are defined as
     \begin{gather}
     \mathfrak{N}^{u} = \Bigg\{ \pi^u=  (\boldsymbol{u}_1^\pi(\cdot),\cdots,\boldsymbol{u}_{\mathcal{T}}^\pi(\cdot)  )  \Bigg| \forall t\in [\mathcal{T}], \forall j \in [n_u],\nonumber\\ {u}_{t,j}^\pi(\boldsymbol{\xi}_{[t]})=\sum_{t'=1}^{t} \sum_{i=1}^{n_\xi} f_{t,j,t',i}(\xi_{t',i}), f_{t,j,t',i} \in C(\varXi_{t',i})   \Bigg\},\\
        \mathfrak{N}^{\gamma} =\Bigg\{\pi^{\gamma}= (\boldsymbol{\gamma}_{1}^\pi(\cdot),\cdots,\boldsymbol{\gamma}_{\mathcal{T}}^\pi(\cdot) )\Bigg|\forall t\in [\mathcal{T}],\forall j \in [n_\gamma], \nonumber\\{\gamma}_{t,j}^\pi(\boldsymbol{\xi}_{[t]})=\sum_{t'=1}^{t} \sum_{i=1}^{n_\xi} f_{t,j,t',i}(\xi_{t',i}), f_{t,j,t',i} \in \emph{PInt}(\varXi_{t',i})    \Bigg\},
    \end{gather}
    where $C(\varXi_{t',i})$ denotes the continuous function space and $\emph{PInt}(\varXi_{t',i})$ denotes the space of piecewise integer functions, {i.e.},
    \begin{equation*}
         \begin{gathered}
        \emph{PInt}(\varXi_{t',i})=\{ f(\cdot)| \exists {l}_{t,i} =w_{0}\leq w_{1}\leq \cdots \leq w_{n-1}\leq w_{n}= {v}_{t,i}, \\\emph{s.t. }\forall i\in[N], f(\cdot) \emph{ is an integer constant on }[w_{i-1},w_{i}) \}.
    \end{gathered}
    \end{equation*}
\end{defn}

\begin{pro}\label{thm_asy}
    Let $\Pi^u_\ast$ and $\Pi^{\gamma}_\ast$ denote the limits of the lifted control policy space, {i.e.},
    \begin{gather}
        \Pi^u_\ast=\lim_{\delta \searrow 0} \bigcup_{\boldsymbol{w}:\epsilon(\boldsymbol{w})\geq \delta} \Pi^{u}(\boldsymbol{w})=
        \bigcup_{n =1,\cdots,\infty} \bigcup_{\boldsymbol{w}:\epsilon(\boldsymbol{w})\geq 1/n} \Pi^{u}(\boldsymbol{w}),\nonumber\\
        \Pi^{\gamma}_\ast=\lim_{\delta \searrow 0} \bigcup_{\boldsymbol{w}:\epsilon(\boldsymbol{w})\geq \delta} \Pi^{\gamma}(\boldsymbol{w})=
        \bigcup_{n =1,\cdots,\infty} \bigcup_{\boldsymbol{w}:\epsilon(\boldsymbol{w})\geq 1/n} \Pi^{\gamma}(\boldsymbol{w}).\nonumber
    \end{gather}

    Then,
    \begin{enumerate}[(i)]
    \item $\Pi^{\gamma}_\ast=\mathfrak{N}^{\gamma}$\label{prop_per_1}.
    
    \item When equipped with topology induced by norm in Definition \ref{def_distance}, $\Pi^{u}_\ast$ is dense in $\mathfrak{N}^{u}$,  \emph{i.e.},
    \begin{equation}
        \Pi^{u}_\ast \subset \mathfrak{N}^{u} \subset \overline{\Pi^{u}_\ast},
    \end{equation}
    where $\overline{\Pi^{u}_\ast}$ is the closure of $\Pi^{u}_\ast$.\label{prop_per_2}
    \end{enumerate}
    
\end{pro}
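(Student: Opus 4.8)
The plan is to reduce both claims to a \emph{representability-plus-approximation} analysis of the scalar building blocks of the lifting map, namely the continuous ramps $V_{t,i,j}$ and the binary steps $Q_{t,i,j}$ of Definition~\ref{def1}. First I would rewrite the limit spaces in a usable form: since any finite breakpoint vector $\boldsymbol{w}$ on a bounded interval has $\epsilon(\boldsymbol{w})>0$, the sets $\bigcup_{\boldsymbol{w}:\epsilon(\boldsymbol{w})\ge\delta}\Pi^{u}(\boldsymbol{w})$ increase as $\delta\searrow0$, so their limit is simply $\bigcup_{\boldsymbol{w}}\Pi^{u}(\boldsymbol{w})$ over \emph{all} finite $\boldsymbol{w}$ (and likewise for $\Pi^{\gamma}_\ast$); thus I never pass a genuine metric limit of breakpoints, I only collect policies over every finite $\boldsymbol{w}$. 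The key structural lemma is then: fixing a dimension on $\varXi_{t',i}=[l_{t,i},v_{t,i}]$ with breakpoints $w_{t',i,1},\dots,w_{t',i,p-1}$, the family $\{1,V_{t',i,1},\dots,V_{t',i,p}\}$ is a basis for the continuous piecewise-linear (CPWL) functions with breakpoints in $\{w_{t',i,j}\}$, because each $V_{t',i,j}$ has unit slope exactly on its own segment and zero slope elsewhere, so $y^0+\sum_j a_j V_{t',i,j}$ realizes any prescribed left-endpoint value and per-segment slopes; dually, $\{1,Q_{t',i,1},\dots,Q_{t',i,p-1}\}$ with \emph{integer} coefficients realizes exactly the integer-valued piecewise-constant (PInt) functions with the same breakpoints, since the partial sums of the step coefficients are the per-segment integer levels. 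Since the LCP of \eqref{conti} (resp.\ \eqref{binar}) is an affine (resp.\ integer-affine) combination of these blocks grouped by $(t',i)$ with $t'\le t$, this identifies $\Pi^{u}(\boldsymbol{w})$ with additive CPWL policies and $\Pi^{\gamma}(\boldsymbol{w})$ with additive PInt policies of breakpoints $\boldsymbol{w}$.

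For part~\ref{prop_per_1} I would prove both inclusions. The inclusion $\Pi^{\gamma}_\ast\subseteq\mathfrak{N}^{\gamma}$ is immediate from the lemma, which writes each LCP integer component as an additive PInt function. For $\mathfrak{N}^{\gamma}\subseteq\Pi^{\gamma}_\ast$, I take an additive integer policy with components $\sum_{t',i} g_{t,j,t',i}$, each $g\in\mathrm{PInt}(\varXi_{t',i})$; each $g$ is an integer combination of its own step functions, and since a PInt function on a coarse partition is again PInt on any refinement, I collect the finitely many breakpoints used across all indices $(t,j,t',i)$ into one common vector $\boldsymbol{w}$ and realize the whole policy \emph{exactly} in $\Pi^{\gamma}(\boldsymbol{w})\subseteq\Pi^{\gamma}_\ast$. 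This exactness is precisely what yields equality rather than mere density.

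For part~\ref{prop_per_2}, the inclusion $\Pi^{u}_\ast\subseteq\mathfrak{N}^{u}$ again follows from the lemma (CPWL$\,\subset C$). For density $\mathfrak{N}^{u}\subseteq\overline{\Pi^{u}_\ast}$, I take an additive continuous policy and approximate each component $f_{t,j,t',i}$ on the compact interval $\varXi_{t',i}$ by its piecewise-linear interpolant on a sufficiently fine uniform grid; uniform continuity gives a sup-norm error below any prescribed $\eta$. Summing these finitely many errors controls $d$ of Definition~\ref{def_distance}, and, after passing to the common refinement of all grids, the interpolant policy lies in $\Pi^{u}(\boldsymbol{w})\subseteq\Pi^{u}_\ast$; letting $\eta\to0$ places the target in $\overline{\Pi^{u}_\ast}$. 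The contrast with part~\ref{prop_per_1} is that a generic continuous function is not CPWL with finitely many breaks, so only approximation, not exact realization, is available.

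The main obstacle I anticipate is the bookkeeping that forces a \emph{single} shared breakpoint vector: one LCP uses a common $\boldsymbol{w}$ for all coordinates and time steps, whereas the decompositions defining $\mathfrak{N}^{u}$ and $\mathfrak{N}^{\gamma}$ allow each separable term its own breakpoints and, in the continuous case, its own approximation grid. Handling this cleanly relies on the stability of CPWL and PInt functions under refinement (inserting a breakpoint with zero slope-change, resp.\ zero step coefficient, leaves the function unchanged), so that all terms can be expressed over one common refined $\boldsymbol{w}$. I also need to verify that the additively separable metric $d$ is exactly what coordinate-wise uniform approximation controls, so the finitely many sup-norm errors simply add rather than interact; this is where I would be most careful to make the density argument rigorous.
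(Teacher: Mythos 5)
Your proposal is correct and follows essentially the same route as the paper's proof: identify each LCP component as an additive piecewise-linear (resp.\ piecewise-integer) function of the scalar disturbances, obtain the forward inclusions immediately, get exact representability for the integer case, and use uniform approximation of continuous functions by piecewise-linear interpolants for density in the continuous case. Your explicit common-refinement argument for merging the per-term breakpoints into a single shared $\boldsymbol{w}$ is a detail the paper glosses over (it simply asserts that ``by adjusting the breakpoints'' any piecewise integer function is representable), so your write-up is, if anything, the more careful version of the same proof.
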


\begin{proof}
    By Definition \ref{def_LCPS}, for $\pi^u=  (\boldsymbol{u}_1^\pi(\cdot),\cdots,\boldsymbol{u}_{\mathcal{T}}^\pi(\cdot)  )  \in \Pi^u(\boldsymbol{w})$ and $\pi^\gamma= (\boldsymbol{\gamma}_{1}^\pi(\cdot),\cdots,\boldsymbol{\gamma}_{\mathcal{T}}^\pi(\cdot) ) \in \Pi^{\gamma}(\boldsymbol{w})$, they satisfy
        \begin{figure}[!htp]  
\centering 
\includegraphics[width=0.4\textwidth]{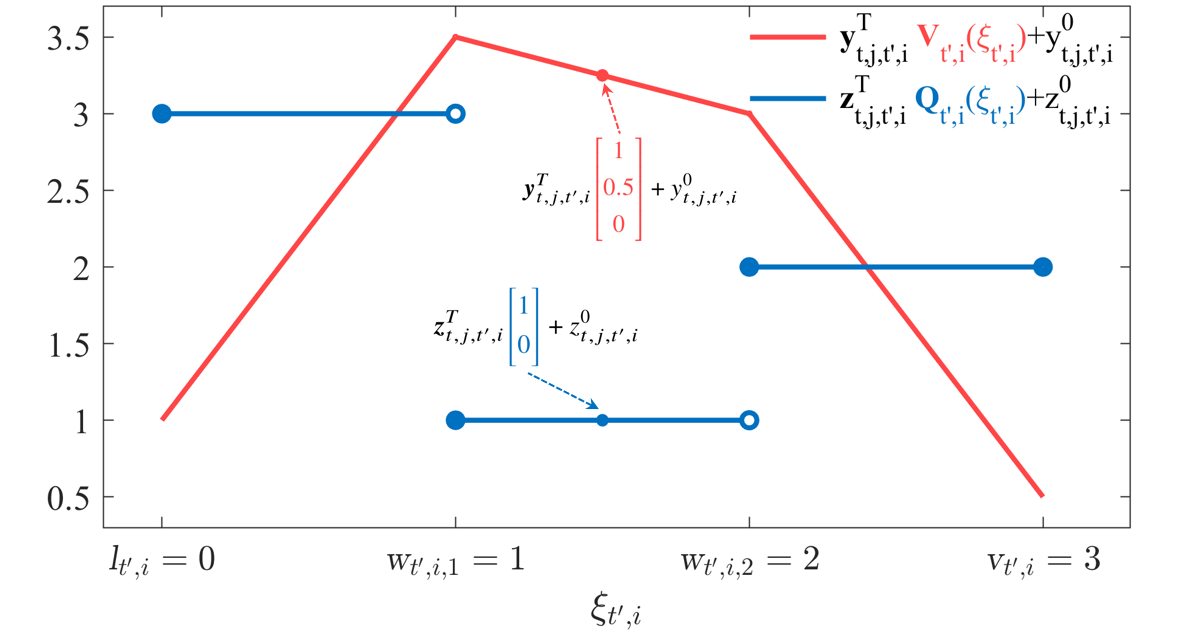} \vspace{-5pt}
{\caption{Illustration of the LCP. (In this example, the support $\varXi_{t',i}=[0,3]$ is divided by two breakpoints $w_{t',i,1}=1$ and $w_{t',i,2}=2$. As for the control policy, we take $\boldsymbol{y}_{t,j,t',i}=[2.5,-0.5,-2.5]^T,{y}_{t,j,t',i}^0=1.0$ for continuous control and $\boldsymbol{z}_{t,j,t',i}=[-2,1]^T,{z}_{t,j,t',i}^0=3$ for integer control.)}\label{Fig1}
}
\vspace{-15pt}
\end{figure}
    \begin{gather}
    \begin{gathered}
         u_{t,j}^\pi(\boldsymbol{\xi}_{[t]})=\sum_{t'=1}^{t} \sum_{i=1}^{n_\xi}\Big\{ \boldsymbol{y}_{t,j,t',i}^T \boldsymbol{V}_{t',i}(\xi_{t',i}) + y^0_{t,j,t',i} \Big\},\\\forall t\in [\mathcal{T}],\forall j \in [n_u],
    \end{gathered}\\
    \begin{gathered}
        \gamma_{t,j}^\pi(\boldsymbol{\xi}_{[t]})= \sum_{t'=1}^{t} \sum_{i=1}^{n_\xi} 
        \Big\{\boldsymbol{z}_{t,j,t',i}^T \boldsymbol{Q}_{t',i}(\xi_{t',i}) + z^0_{t,j,t',i}\Big\},\\\forall t\in [\mathcal{T}],\forall j \in [n_\gamma],
    \end{gathered}
    \end{gather}
    where $\boldsymbol{y}_{t,j,t',i}\in \mathbb{R}^{p_{t',i}}$, $y^0_{t,j,t',i}\in \mathbb{R}$, $\boldsymbol{z}_{t,j,t',i}\in \mathbb{Z}^{p_{t',i}-1}$, and $z^0_{t,j,t',i}\in \mathbb{Z}$.

    By definition of functions $\boldsymbol{V}_{t',i}$ and $\boldsymbol{Q}_{t',i}$ (in Definition \ref{def1}), $\boldsymbol{y}_{t,j,t',i}^T \boldsymbol{V}_{t',i}(\xi_{t',i}) + y^0_{t,j,t',i}$ is a piecewise linear function on $\varXi_{t',j}$ and $\boldsymbol{z}_{t,j,t',i}^T \boldsymbol{Q}_{t',i}(\xi_{t',i}) + z^0_{t,j,t',i}$ is a piecewise integer function on $\varXi_{t',j}$. To see this result more clearly, we give an illustration of a concrete example in Fig. \ref{Fig1}.

        Therefore, lifted control policies are also additive control policies, so
    \begin{gather}
        \Pi^{\gamma}_\ast\subset\mathfrak{N}^{\gamma},\Pi^{u}_\ast \subset \mathfrak{N}^{u}.
    \end{gather}

    By adjusting the breakpoints $\boldsymbol{w}$, $\boldsymbol{z}_{t,j,t',i}^T \boldsymbol{Q}_{t',i}(\xi_{t',i}) + z^0_{t,j}$ can represent any piecewise integer function on $\varXi_{t',j}$. 
    
    Therefore, (\ref{prop_per_1}) holds. 

    For (\ref{prop_per_2}), $\forall \pi^u_1 \in \mathfrak{N}^{u}$ and $\forall \pi^u_2 \in \Pi^u(\boldsymbol{w})$, we have 
    \begin{gather*}
        d(\pi^u_1 ,\pi^u_2)= \sum_{t=1}^{\mathcal{T}}\sum_{j=1}^{n_u} \| {u}^{\pi_1}_{t,j}-{u}^{\pi_2}_{t,j}  \|_{\infty} = \\\sum_{t=1}^{\mathcal{T}}\sum_{j=1}^{n_u} \left|\sum_{t'=1}^{t} \sum_{i=1}^{n_\xi}\Big\{  f_{t,j,t',i}(\xi_{t',i}) - \boldsymbol{y}_{t,j,t',i}^T \boldsymbol{V}_{t',i}(\xi_{t',i}) - y^0_{t,j,t',i} \Big\} \right|\\
        \leq \sum_{t=1}^{\mathcal{T}}\sum_{j=1}^{n_u} \sum_{t'=1}^{t} \sum_{i=1}^{n_\xi}\left|   f_{t,j,t',i}(\xi_{t',i}) - \boldsymbol{y}_{t,j,t',i}^T \boldsymbol{V}_{t',i}(\xi_{t',i}) - y^0_{t,j,t',i}  \right|,
    \end{gather*}
    where the inner term $\|   f_{t,j,t',i}(\xi_{t',i}) - \boldsymbol{y}_{t,j,t',i}^T \boldsymbol{V}_{t',i}(\xi_{t',i}) - y^0_{t,j,t',i} \|_{\infty}$ measures the distance between a continuous function $f_{t,j,t',i}(\xi_{t',i})$ and a piecewise linear function $\boldsymbol{y}_{t,j,t',i}^T \boldsymbol{V}_{t',i}(\xi_{t',i}) + y^0_{t,j,t',i}$.
    
    It is well-known that when the length of each segment is small enough ($\epsilon(\boldsymbol{w})\to 0$),  piecewise linear functions can approximate a continuous function to arbitrary accuracy under the supremum norm, so 
    \begin{gather}
       \forall \pi^u_1 \in \mathfrak{N}^{u}, \lim_{\epsilon(\boldsymbol{w})\searrow 0} \min_{\pi^u_2 \in \Pi^u(\boldsymbol{w})} d(\pi^u_1 ,\pi^u_2)=0.
    \end{gather}

    Therefore, (\ref{prop_per_2}) holds.
\end{proof}

Based on Proposition \ref{thm_asy}, we prove the asymptotic performance theorem in Theorem \ref{thm_as_per}, which states that the asymptotic performance given by LCP when $\epsilon(\boldsymbol{w})\to 0$ coincides with the performance of the additive control policy space.

\begin{thm}[Asymptotic Performance]\label{thm_as_per}
    If cost functions $c_t(\boldsymbol{x}_t,\boldsymbol{u}_t,\boldsymbol{\gamma}_t)$ are absolutely continuous, and for each feasible integer controller ${\pi}^\gamma$, the set of feasible continuous controllers
    \begin{equation}\label{thm_set_feas}
       \mathfrak{A}({\pi}^\gamma) = \{ {\pi}^u \in  \mathfrak{N}^{u} | \boldsymbol{\pi}= ({\pi}^u,{\pi}^\gamma) \text{ is feasible for (\ref{drsc})}  \}\nonumber
    \end{equation}
    has non-empty interior, then 
    \begin{equation}
        \lim_{\delta \searrow 0} \min_{w:\epsilon(\boldsymbol{w})\geq \delta} J^{\ast}(\Pi^u(\boldsymbol{w}),\Pi^\gamma(\boldsymbol{w}))= J^\ast (\mathfrak{N}^{u},\mathfrak{N}^{\gamma} ).
    \end{equation}
\end{thm}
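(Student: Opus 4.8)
The plan is to first rewrite the double limit on the left-hand side in a more convenient form, and then establish the claimed equality through two matching inequalities. Since the admissible breakpoint set $\{\boldsymbol{w}:\epsilon(\boldsymbol{w})\geq\delta\}$ grows as $\delta\searrow 0$, the quantity $\min_{\boldsymbol{w}:\epsilon(\boldsymbol{w})\geq\delta}J^\ast(\Pi^u(\boldsymbol{w}),\Pi^\gamma(\boldsymbol{w}))$ is non-increasing in $\delta$, and its limit equals $\inf_{\boldsymbol{w}}J^\ast(\Pi^u(\boldsymbol{w}),\Pi^\gamma(\boldsymbol{w}))$ taken over all finite breakpoint configurations (any such configuration has $\epsilon(\boldsymbol{w})>0$ and hence is admissible for all sufficiently small $\delta$). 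Thus it suffices to prove $\inf_{\boldsymbol{w}}J^\ast(\Pi^u(\boldsymbol{w}),\Pi^\gamma(\boldsymbol{w}))=J^\ast(\mathfrak{N}^u,\mathfrak{N}^\gamma)$. The lower bound $\inf_{\boldsymbol{w}}J^\ast(\Pi^u(\boldsymbol{w}),\Pi^\gamma(\boldsymbol{w}))\geq J^\ast(\mathfrak{N}^u,\mathfrak{N}^\gamma)$ is the easy direction and follows from policy-space nesting: by Proposition \ref{thm_asy}, $\Pi^u(\boldsymbol{w})\subset\Pi^u_\ast\subset\mathfrak{N}^u$ and $\Pi^\gamma(\boldsymbol{w})\subset\Pi^\gamma_\ast=\mathfrak{N}^\gamma$ for every $\boldsymbol{w}$, so optimizing (\ref{drsc}) over the smaller LCP space can only raise the optimal value; taking the infimum preserves the inequality.

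For the reverse (hard) direction, I would fix $\eta>0$ and pick an additive policy $(\pi^u_0,\pi^\gamma_0)$ with $\pi^u_0\in\mathfrak{A}(\pi^\gamma_0)$ and $\pi^\gamma_0\in\mathfrak{N}^\gamma$ that is $\eta$-optimal for (\ref{drsc}) over $\mathfrak{N}^u\otimes\mathfrak{N}^\gamma$. The argument then has three ingredients. First, the integer part is matched exactly: since $\pi^\gamma_0\in\mathfrak{N}^\gamma=\Pi^\gamma_\ast=\bigcup_{\boldsymbol{w}}\Pi^\gamma(\boldsymbol{w})$ by Proposition \ref{thm_asy}(\ref{prop_per_1}), there is a finite configuration $\boldsymbol{w}_\gamma$ with $\pi^\gamma_0\in\Pi^\gamma(\boldsymbol{w}_\gamma)$, and this membership is preserved under any refinement $\boldsymbol{w}\supseteq\boldsymbol{w}_\gamma$. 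Second, I would invoke the non-empty interior hypothesis to make room for approximation: the constraint (\ref{hard}) is affine in the continuous policy for fixed $\pi^\gamma_0$, so $\mathfrak{A}(\pi^\gamma_0)$ is convex; combining the (possibly boundary) $\eta$-optimal $\pi^u_0$ with an interior point and taking a small mixing weight yields, using continuity of $J$ (established below), an interior policy $\pi^u_{\mathrm{int}}$ that is still $O(\eta)$-optimal and admits a feasibility ball of some radius $r>0$ in the metric $d$ of Definition \ref{def_distance}. Third, by the density statement Proposition \ref{thm_asy}(\ref{prop_per_2}), I can choose a sufficiently fine refinement $\boldsymbol{w}\supseteq\boldsymbol{w}_\gamma$ and a lifted policy $\pi^u_{\mathrm{LCP}}\in\Pi^u(\boldsymbol{w})$ with $d(\pi^u_{\mathrm{LCP}},\pi^u_{\mathrm{int}})<r$, so that $(\pi^u_{\mathrm{LCP}},\pi^\gamma_0)$ is feasible and lies in $\Pi^u(\boldsymbol{w})\otimes\Pi^\gamma(\boldsymbol{w})$.

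It remains to control the cost gap, which is where the absolute continuity of the $c_t$ enters, and I expect this to be the main obstacle: establishing continuity of the worst-case cost $J(\boldsymbol{x},\cdot)$ with respect to $d$. The argument I plan is as follows. For a fixed feasible $\pi^\gamma_0$, the reachable states and controls produced by policies within a bounded $d$-neighborhood of $\pi^u_{\mathrm{int}}$ range over a compact set, since the support $\varXi$ is a bounded hyperrectangle by Assumption \ref{supp_assum} and the relevant policies are uniformly bounded; on this compact set each absolutely continuous $c_t$ is uniformly continuous. Because the state is affine in the continuous controls through (\ref{sys}), a uniform (pointwise-in-$\boldsymbol{\xi}$) modulus-of-continuity bound in terms of $d(\pi^u_{\mathrm{LCP}},\pi^u_{\mathrm{int}})$ follows. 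Crucially, since this bound holds for every realization $\boldsymbol{\xi}$, it passes through the expectation under any $\mathbb{P}$ and hence through the supremum over $\mathbb{P}\in\mathscr{B}$, giving $|J(\boldsymbol{x},(\pi^u_{\mathrm{LCP}},\pi^\gamma_0))-J(\boldsymbol{x},(\pi^u_{\mathrm{int}},\pi^\gamma_0))|\to 0$ as $r\to 0$; the same continuity justifies the interior step above.

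Combining the three ingredients gives $J^\ast(\Pi^u(\boldsymbol{w}),\Pi^\gamma(\boldsymbol{w}))\leq J(\boldsymbol{x},(\pi^u_{\mathrm{LCP}},\pi^\gamma_0))\leq J^\ast(\mathfrak{N}^u,\mathfrak{N}^\gamma)+O(\eta)$. Taking the infimum over $\boldsymbol{w}$ and then letting $\eta\searrow 0$ yields $\inf_{\boldsymbol{w}}J^\ast(\Pi^u(\boldsymbol{w}),\Pi^\gamma(\boldsymbol{w}))\leq J^\ast(\mathfrak{N}^u,\mathfrak{N}^\gamma)$, which together with the lower bound closes the gap and completes the proof.
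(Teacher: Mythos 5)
Your proposal is correct and follows essentially the same route as the paper's proof: match the integer policy exactly via Proposition \ref{thm_asy}.(\ref{prop_per_1}), then combine the non-empty-interior hypothesis, convexity of $\mathfrak{A}(\bar{\pi}^\gamma)$, and the density statement of Proposition \ref{thm_asy}.(\ref{prop_per_2}) to produce \emph{feasible} LCP approximants of the continuous policy, concluding by continuity of the cost. The differences are only presentational: you spell out the easy nesting inequality and the uniform-continuity argument for $J$ that the paper compresses into ``by the absolute continuity of the cost functions,'' and you replace the paper's explicit reflection construction $\hat{\pi}_u=2^n\pi^u_n-(2^n-1)\bar{\pi}^u$ with the equivalent standard fact that a convex combination of a feasible point with an interior point is interior.
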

\begin{proof}
    According to Proposition \ref{thm_asy}.(\ref{prop_per_1}), for any $(\Bar{\pi}^u,\Bar{\pi}^\gamma)\in \mathfrak{N}^u \times \mathfrak{N}^\gamma$ feasible for problem (\ref{drsc}), there exists a $\boldsymbol{w}$ such that $\Bar{\pi}^\gamma \in \Pi^\gamma(\boldsymbol{w})$.
    
    Therefore, by the absolute continuity of the cost functions $c_t$, it remains to prove that there exists a sequence of continuous controllers $\pi^u_n\in \Pi^u_\ast$ such that $(\pi^u_n,\Bar{\pi}^\gamma)$ are feasible for problem (\ref{drsc}) and satisfy $\lim_{n \to \infty} d( \pi^u_n , \Bar{\pi}^u )=0$.

   Since the set of feasible continuous controllers $\mathfrak{A}(\Bar{\pi}^\gamma)$ has non-empty interior, there exists a $\pi^u_0\in \mathfrak{N}^{u}$ such that $\{ \pi^u\in   \mathfrak{N}^{u} | d(\pi^u,\pi^u_0  ) < \rho \}\subset \mathfrak{A}(\Bar{\pi}^\gamma)$. Let $d_0=d( \Bar{\pi}^u ,\pi_0^u )$. Consider the set 
    \begin{equation}
        \mathcal{B}_n=\Bigg\{ \pi^u\in   \mathfrak{N}^{u} \Bigg|\ d \Bigg( \pi^u , \frac{2^n-1}{2^n} \Bar{\pi}^u+\frac{1}{2^n} \pi^u_0\Bigg) < \frac{\rho}{2^n}  \Bigg\}.
    \end{equation}
    
    Since $\mathcal{B}_n$ is open in $\mathfrak{N}^{u}$ and $\Pi^{u}_\ast$ is dense in $\mathfrak{N}^{u}$ by Proposition \ref{thm_asy}.(\ref{prop_per_2}), then $\Pi^{u}_\ast\cap \mathcal{B}_n$ is dense in $\mathfrak{N}^{u}\cap \mathcal{B}_n$. Therefore, we can choose $\pi^u_n$ from $\mathcal{B}_n$ such that $\pi^u_n\in \Pi^{u}_\ast$.

    By this selection, we have
    \begin{gather}
    \begin{gathered}
        d(\pi^u_n, \Bar{\pi}^u )\\\leq d \Bigg( \pi^u_n , \frac{2^n-1}{2^n} \Bar{\pi}^u+\frac{1}{2^n} \pi^u_0\Bigg) + d\Bigg( \frac{2^n-1}{2^n} \Bar{\pi}^u+\frac{1}{2^n} \pi^u_0, \Bar{\pi}^u \Bigg)\\
        <\frac{\rho}{2^n} + d\Bigg(\frac{1}{2^n} \pi^u_0,\frac{1}{2^n} \Bar{\pi}^u \Bigg)=\frac{\rho}{2^n}+\frac{1}{2^n}d(\pi^u_0,\Bar{\pi}^u)=\frac{\rho+d_0}{2^n}.
    \end{gathered}
    \end{gather}
    
    Therefore, the second condition $\lim_{n \to \infty} d( \pi^u_n , \Bar{\pi}^u )=0$ is satisfied.

    To prove that $\pi^u_n\in \mathfrak{A}({\pi}^\gamma)$, consider $\hat{\pi}_u=2^n\pi^u_n -(2^n-1)\Bar{\pi}^u\in \mathfrak{N}^{u}$. Notice that 
    \begin{gather}
        d(\hat{\pi}_u,{\pi}^u_0 )
        =d(2^n\pi^u_n-(2^n-1)\Bar{\pi}^u,{\pi}^u_0 )\\=d(2^n\pi^u_n ,(2^n-1)\Bar{\pi}^u+ {\pi}^u_0 )\\
        =2^n d( \pi^u_n ,(2^n-1)/2^n\Bar{\pi}^u+1/2^n {\pi}^u_0 )< 2^n \times \rho/2^n =\rho.
    \end{gather}
    
    Therefore, $\hat{\pi}_u\in \{ \pi^u\in   \mathfrak{N}^{u} | d(\pi^u, \pi^u_0  ) < \rho \}\subset \mathfrak{A}(\Bar{\pi}^\gamma)$.

    By the linearity of the constraints of the problem (\ref{drsc}), $\mathfrak{A}(\Bar{\pi}^\gamma)$ is convex. Therefore, $\pi^u_n\in \mathfrak{A}(\Bar{\pi}^\gamma)$ since $\pi^u_n=1/2^n \hat{\pi}^u +(2^n-1)/2^n \Bar{\pi}^u$.
\end{proof}
\vspace{-10pt}
\subsection{Non-asymptotic Performance Analysis}
\label{no_as_section}
In this part, we aim to analyze the non-asymptotic performance of LCP induced by particular breakpoints $\boldsymbol{w}$.
Due to the non-convex nature of the integer lifted policy space $\Pi^\gamma (\boldsymbol{w})$, conducting non-asymptotic analysis with integer control poses considerable challenges. Hence, in this section, we consider systems with only continuous control and analyze the non-asymptotic performance with Lipschitz continuous controllers which are defined as follows.
\begin{defn}[Lipschitz Continuous Control Policy Space]\label{lip_conti_cont}
    Let the disturbance space $\varXi$ be equipped with the 1-norm distance. The $L_\pi$ Lipschitz continuous LCP space is defined as
    \begin{gather}
        \Pi^u_{L_\pi}(\boldsymbol{w})=\{ \pi^u \in \Pi^u(\boldsymbol{w})|u_{t,j}^\pi \text{ is } L_\pi \text{ Lipschitz}\},
    \end{gather}
    and the $L_\pi$ Lipschitz continuous additive control policy space is defined as
    \begin{gather}
        \mathfrak{N}^{u}_{L_\pi}=\{ \pi^u \in \mathfrak{N}^{u}|u_{t,j}^\pi \text{ is } L_\pi \text{ Lipschitz}\}.
    \end{gather}
\end{defn}

By Definition \ref{lip_conti_cont}, it is noteworthy that for an additive control policy $u_{t,j}^\pi=\sum_{t'=1}^t\sum_{i=1}^{n_\xi}u_{t,j,t',i}^\pi$, if $u_{t,j}^\pi$ is $L_\pi$ Lipschitz, each $u_{t,j,t',i}^\pi$ is also $L_\pi$ Lipschitz.

Constraint (\ref{hard}) that defines the feasible controllers also poses challenges to non-asymptotic analysis, and to handle this feasibility issue, we simplify the feasible policy space to a hyper-ball centered at a given control policy.

Under the above simplifications, we establish the tightest non-asymptotic performance bound in the following theorem.

\begin{thm}[Non-asymptotic Performance Bound]\label{Non_asymptotic}
Let the system only involve continuous control and the feasible policy space be a hyper ball $ \mathfrak{C}$ with radius $\rho$ centered at a given lifted continuous control policy $\pi_0^u\in \Pi^u(\boldsymbol{w})$, {i.e.},
\begin{equation}
    \mathfrak{C}=\{ \pi^u | d(\pi^u,\pi_0^u)\leq \rho\}.
\end{equation}

If cost functions $c_t(\boldsymbol{x}_t,\boldsymbol{u}_t)$ are Lipschitz continuous with Lipschitz constant $L_c$, then
    \begin{equation}
         J^\ast (\Pi^u_{L_\pi}(\boldsymbol{w}) )  -  J^\ast (\mathfrak{N}^{u}_{L_\pi} ) \lesssim \mathcal{O}\left(\epsilon(\boldsymbol{w}) \right),
    \end{equation}
    and this bound is tight.

As a consequence, if the breakpoints are selected to segment each dimension of the support into $n$ segments of equal length, then $\epsilon(\boldsymbol{w})\lesssim \mathcal{O}\left(1/n \right)$, and as a result
    \begin{equation}
         J^\ast (\Pi^u_{L_\pi}(\boldsymbol{w}) )  -  J^\ast (\mathfrak{N}^{u}_{L_\pi} ) \lesssim \mathcal{O}\left(1/n \right).
    \end{equation}
\end{thm}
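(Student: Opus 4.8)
The plan is to exploit the nesting $\Pi^u_{L_\pi}(\boldsymbol{w}) \subseteq \mathfrak{N}^u_{L_\pi}$ (every Lipschitz LCP is a Lipschitz additive policy by Proposition \ref{thm_asy}), which immediately gives $J^\ast(\Pi^u_{L_\pi}(\boldsymbol{w})) \geq J^\ast(\mathfrak{N}^u_{L_\pi})$, so the left-hand side is a nonnegative quantity to be bounded from above. The central tool is a Lipschitz-stability estimate for the worst-case objective: because the dynamics (\ref{sys}) are linear, each state $\boldsymbol{x}_t$ is an affine image of the past controls, so for any two policies $\pi, \pi'$ one has $\|\boldsymbol{x}_t^\pi(\boldsymbol{\xi}) - \boldsymbol{x}_t^{\pi'}(\boldsymbol{\xi})\| \leq C\, d(\pi, \pi')$ pointwise in $\boldsymbol{\xi}$; combined with $L_c$-Lipschitz continuity of the stage costs and the elementary inequality $|\max_{\mathbb{P}} \mathbb{E}_{\mathbb{P}}[g] - \max_{\mathbb{P}} \mathbb{E}_{\mathbb{P}}[h]| \leq \max_{\mathbb{P}} \mathbb{E}_{\mathbb{P}}[|g-h|]$, this yields $|J(\boldsymbol{x},\pi) - J(\boldsymbol{x},\pi')| \leq C_J\, d(\pi,\pi')$ for a constant $C_J$ depending only on $L_c$, the discount factors, and the system matrices. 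Thus it suffices to exhibit, for a (near-)optimal additive policy, a feasible Lipschitz LCP that is $\mathcal{O}(\epsilon(\boldsymbol{w}))$-close in the metric $d$.

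First I would fix an optimal (or, up to a vanishing slack, near-optimal) policy $\pi^u_\star \in \mathfrak{N}^u_{L_\pi} \cap \mathfrak{C}$ attaining $J^\ast(\mathfrak{N}^u_{L_\pi})$, and approximate it by the continuous piecewise-linear interpolant $\hat{\pi}^u$ agreeing with $\pi^u_\star$ at every breakpoint of $\boldsymbol{w}$. By construction $\hat{\pi}^u \in \Pi^u(\boldsymbol{w})$, and since the slope of each linear piece is a divided difference of an $L_\pi$-Lipschitz function, $\hat{\pi}^u$ is again $L_\pi$-Lipschitz, i.e. $\hat{\pi}^u \in \Pi^u_{L_\pi}(\boldsymbol{w})$. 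The classical tent estimate for linear interpolation of a Lipschitz function gives, on any segment of length $h$, an $L^\infty$ error at most $L_\pi h / 2 \leq L_\pi \epsilon(\boldsymbol{w})/2$; summing the additive components over $t, j, t', i$ then yields $d(\pi^u_\star, \hat{\pi}^u) \leq C_1\, \epsilon(\boldsymbol{w})$ with $C_1$ depending on $\mathcal{T}, n_u, n_\xi, L_\pi$.

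The remaining difficulty is feasibility: the interpolant may leave the ball $\mathfrak{C}$, since only $d(\hat{\pi}^u, \pi_0^u) \leq \rho + C_1 \epsilon(\boldsymbol{w})$ is guaranteed. To repair this I would contract toward the center, setting $\tilde{\pi}^u = (1-\eta)\hat{\pi}^u + \eta \pi_0^u$ with $\eta = C_1 \epsilon(\boldsymbol{w}) / (\rho + C_1 \epsilon(\boldsymbol{w})) = \mathcal{O}(\epsilon(\boldsymbol{w}))$. As $\pi_0^u \in \Pi^u_{L_\pi}(\boldsymbol{w})$ and the Lipschitz LCP space is convex (in the continuous-only setting), $\tilde{\pi}^u \in \Pi^u_{L_\pi}(\boldsymbol{w})$; the choice of $\eta$ forces $d(\tilde{\pi}^u, \pi_0^u) \leq \rho$, so $\tilde{\pi}^u \in \mathfrak{C}$ is feasible, while the triangle inequality gives $d(\tilde{\pi}^u, \pi^u_\star) \leq 2 C_1 \epsilon(\boldsymbol{w})$. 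Feeding this into the stability estimate yields $J^\ast(\Pi^u_{L_\pi}(\boldsymbol{w})) \leq J(\boldsymbol{x}, \tilde{\pi}^u) \leq J(\boldsymbol{x}, \pi^u_\star) + 2 C_J C_1 \epsilon(\boldsymbol{w}) = J^\ast(\mathfrak{N}^u_{L_\pi}) + \mathcal{O}(\epsilon(\boldsymbol{w}))$, which is the asserted upper bound. The equal-segmentation corollary is then immediate: dividing each $[l_{t,i}, v_{t,i}]$ into $n$ equal pieces gives $\epsilon(\boldsymbol{w}) = \max_{t,i}(v_{t,i}-l_{t,i})/n = \mathcal{O}(1/n)$.

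To establish tightness I would construct a minimal instance (one stage, scalar disturbance and control, state chosen to echo the disturbance) whose optimal $L_\pi$-Lipschitz additive policy is a genuinely nonlinear target $\phi^\star$ — e.g. a strictly convex or shifted tent-shaped Lipschitz function whose kink sits in a segment interior — together with an absolute-deviation (hence Lipschitz) tracking cost $c(\boldsymbol{x}, u) = |u - \phi^\star|$. Since every Lipschitz LCP is continuous piecewise linear with nodes at $\boldsymbol{w}$, its $L^\infty$ distance to $\phi^\star$ is bounded below by $\Theta(\epsilon(\boldsymbol{w}))$ on the critical segment; choosing the ambiguity set so that the worst-case distribution can concentrate mass near the point of maximal interpolation error converts this pointwise gap into an objective gap of order $\epsilon(\boldsymbol{w})$, giving $J^\ast(\Pi^u_{L_\pi}(\boldsymbol{w})) - J^\ast(\mathfrak{N}^u_{L_\pi}) \geq c\,\epsilon(\boldsymbol{w})$ and matching the upper bound up to constants. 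I expect this last point — turning the $\Theta(\epsilon(\boldsymbol{w}))$ sup-norm interpolation error into a $\Theta(\epsilon(\boldsymbol{w}))$ gap in the worst-case expectation, rather than the $\Theta(\epsilon(\boldsymbol{w})^2)$ one would get from a single absolutely-continuous law — to be the main obstacle, and it is precisely where the distributional robustness (the adversary's freedom to localize mass) is essential.
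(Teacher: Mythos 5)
Your proposal follows essentially the same route as the paper's proof: approximate a (near-)optimal additive Lipschitz policy by its piecewise-linear interpolant at the breakpoints, observe that interpolation preserves the Lipschitz constant and incurs sup-norm error $\mathcal{O}(\epsilon(\boldsymbol{w}))$, propagate this through the linear dynamics and the $L_c$-Lipschitz costs to bound the gap in the worst-case expectation, and prove tightness with a scalar single-stage instance whose optimal policy is a tent function kinked inside a segment, under a cost and a distribution that place $\Theta(1)$ mass on that segment (the paper uses the fixed three-atom law $\tfrac14\delta_0+\tfrac14\delta_\epsilon+\tfrac12\delta_{\epsilon/2}$, so no adversarial freedom is actually needed). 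The only substantive difference is your contraction step toward $\pi_0^u$ to restore feasibility, which is valid but unnecessary: since $\pi_0^u$ is itself piecewise linear with breakpoints $\boldsymbol{w}$, the paper notes that the distance from the interpolant to $\pi_0^u$ is attained at the breakpoints, where the interpolant coincides with $\pi^u_\star$, so the interpolant already lies in $\mathfrak{C}$.
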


\begin{proof} 
    Notice that
    \begin{gather}
    J^\ast (\Pi^u_{L_\pi}(\boldsymbol{w}) )  -  J^\ast (\mathfrak{N}^{u}_{L_\pi} )\\
    \begin{gathered}
     =
        \min_{\pi_2\in \Pi^u_{L_\pi}(\boldsymbol{w})\cap \mathfrak{C}}\ \max_{\mathbb{P}\in \mathscr{B}}\mathbb{E}_{\mathbb{P}}\left[ \sum_{t=1}^{\mathcal{T} } c_t(\boldsymbol{x}_t^{\pi_2},\boldsymbol{u}_t^{\pi_2})\right] \\- \min_{\pi_1\in \mathfrak{N}^{u}_{L_\pi}\cap \mathfrak{C}}\ \max_{\mathbb{P}\in \mathscr{B}}\mathbb{E}_{\mathbb{P}}\left[ \sum_{t=1}^{\mathcal{T} } c_t(\boldsymbol{x}_t^{\pi_1},\boldsymbol{u}_t^{\pi_1})\right]
    \end{gathered}\\
    \begin{gathered}
        =\max_{\pi_1\in \mathfrak{N}^{u}_{L_\pi}\cap \mathfrak{C}}\min_{\pi_2\in \Pi^u_{L_\pi}(\boldsymbol{w})\cap \mathfrak{C}} \Bigg\{ \max_{\mathbb{P}\in \mathscr{B}}\mathbb{E}_{\mathbb{P}}\left[ \sum_{t=1}^{\mathcal{T} } c_t(\boldsymbol{x}_t^{\pi_2},\boldsymbol{u}_t^{\pi_2})\right]\\ - \max_{\mathbb{P}\in \mathscr{B}}\mathbb{E}_{\mathbb{P}}\left[ \sum_{t=1}^{\mathcal{T} } c_t(\boldsymbol{x}_t^{\pi_1},\boldsymbol{u}_t^{\pi_1})\right]  \Bigg\}
    \end{gathered}\label{thm_min_eq}\\
    \begin{gathered}
       \leq \max_{\pi_1\in \mathfrak{N}^{u}_{L_\pi}\cap \mathfrak{C}}\ \min_{\pi_2\in \Pi^u_{L_\pi}(\boldsymbol{w})\cap \mathfrak{C}}\ \max_{\mathbb{P}\in \mathscr{B}}\\\mathbb{E}_{\mathbb{P}} \left\{ \sum_{t=1}^{\mathcal{T} } \left[c_t(\boldsymbol{x}_t^{\pi_2},\boldsymbol{u}_t^{\pi_2}) -c_t(\boldsymbol{x}_t^{\pi_1},\boldsymbol{u}_t^{\pi_1}) \right] \right\},
    \end{gathered}\label{thm_min_ep} 
\end{gather}
where $\boldsymbol{x}_t^{\pi}$ and $\boldsymbol{u}_t^{\pi}$ represent state and controller induced by $\pi$.

Inequality (\ref{thm_min_ep}) is in the form of a max-min problem or a two-player game. The first player chooses a $\pi_1\in  \mathfrak{N}^{u}_{L_\pi}\cap \mathfrak{C}$, and then in response to $\pi_1$, the second player chooses a $\pi_2\in \Pi^u_{L_\pi}(\boldsymbol{w})\cap \mathfrak{C}$. 
By Proposition \ref{thm_asy}, both $\boldsymbol{u}_t^{\pi_1}$ and $\boldsymbol{u}_t^{\pi_2}$ are in the form of additive controllers, \emph{i.e.}, $u_{t,j}^{\pi_1}=\sum_{t'\in[t],i\in [n_\xi]}u_{t,j,t',i}^{\pi_1}$ and $u_{t,j}^{\pi_2}=\sum_{t'\in[t],i\in [n_\xi]}u_{t,j,t',i}^{\pi_2}$.
Given a control policy $\pi_1$ by the first player, we set the strategy of the second player as
\begin{equation}\label{player_two_stra}
\begin{gathered}
    u_{t,j,t',i}^{\pi_2}(w_{t',i,k-1})= u_{t,j,t',i}^{\pi_1}(w_{t',i,k-1}).
\end{gathered}
\end{equation}

Equation (\ref{player_two_stra}) simply means that we construct $\pi_2$ such that it equals to $\pi_1$ at all the breakpoints $\boldsymbol{w}$, and this requirement can be satisfied since each $u_{t,j,t',i}^{\pi_2}$ is a piecewise linear function and breakpoints are just ${w}_{t',i,k-1},k\in [p_{t',i}+1]$.

We next prove that the constructed $\pi_2$ is in $\Pi^u_{L_\pi}(\boldsymbol{w})\cap \mathfrak{C}$. Notice that according to Definition \ref{def_distance},
\begin{gather}
    d(\pi^u_2,\pi^u_0)=\sum_{t=1}^{\mathcal{T}}\sum_{j=1}^{n_u} \| {u}^{\pi_2}_{t,j}-{u}^{\pi_0}_{t,j}  \|_{\infty} \\
    \leq \sum_{t=1}^{\mathcal{T}}\sum_{j=1}^{n_u}
    \sum_{t'=1}^{t}\sum_{i=1}^{n_\xi} \| ({u}^{\pi_2}_{t,j,t',i}-{u}^{\pi_0}_{t,j,t',i})\|_\infty\\
    = \sum_{t=1}^{\mathcal{T}}\sum_{j=1}^{n_u}
    \sum_{t'=1}^{t}\sum_{i=1}^{n_\xi} \max_{k\in[p_{t',i}+1]} \Big| ({u}^{\pi_2}_{t,j,t',i}-{u}^{\pi_0}_{t,j,t',i})(w_{t',i,k-1})  \Big| \label{adadafaf}\\
    =\sum_{t=1}^{\mathcal{T}}\sum_{j=1}^{n_u}
    \sum_{t'=1}^{t}\sum_{i=1}^{n_\xi}  \max_{k\in[p_{t',i}+1]} \Big| ({u}^{\pi_1}_{t,j,t',i}-{u}^{\pi_0}_{t,j,t',i})(w_{t',i,k-1})  \Big|\\
    \leq d(\pi^u_1,\pi^u_0) \leq \rho,
\end{gather}
where equality (\ref{adadafaf}) holds because both ${u}^{\pi_2}_{t,j,t',i}$ and ${u}^{\pi_0}_{t,j,t',i}$ are piecewise linear functions with breakpoints $w_{t',j,k-1}$.

To verify the Lipschitz property of $\pi_2$, it is easy to see that 
\begin{gather}
    \max_{\boldsymbol{\xi}^1,\boldsymbol{\xi}^2\in \varXi_{[t]}} \Bigg| \frac{u_{t,j}^{\pi_2}(\boldsymbol{\xi}^1)-u_{t,j}^{\pi_2}(\boldsymbol{\xi}^2)}{\|  \boldsymbol{\xi}^1-\boldsymbol{\xi}^2\|_1}\Bigg|\\
    =\max_{\boldsymbol{\xi}^1,\boldsymbol{\xi}^2\in \varXi_{[t]}}\Bigg|\frac{\sum_{t'=1}^t\sum_{i=1}^{n_\xi } u_{t,j,t',i}^{\pi_2}({\xi}^1_{t',i})-u_{t,j,t',i}^{\pi_2}({\xi}^2_{t',i})}{\sum_{t'=1}^t\sum_{i=1}^{n_\xi } |{\xi}^1_{t',i}-{\xi}^2_{t',i}  |}\Bigg|
\end{gather}
reaches the maximum when $\boldsymbol{\xi}^1,\boldsymbol{\xi}^2$ are at the breakpoints. Therefore, the Lipschitz constant of $\pi_2$ is no larger than that of $\pi_1$. 

Under this construction, the difference between $\boldsymbol{u}_{t}^{\pi_1}$ and $\boldsymbol{u}_{t}^{\pi_2}$ can be bounded as follows.
\begin{gather*}
    | u_{t,j}^{\pi_1}(\boldsymbol{\xi}) -u_{t,j}^{\pi_2}(\boldsymbol{\xi}) |\leq \sum_{t'=1}^t \sum_{i=1}^{n_\xi} | u_{t,j,t',i}^{\pi_1}({\xi}_{t',i}) -u_{t,j,t',i}^{\pi_2}({\xi}_{t',i}) |\\
    \leq \sum_{t'=1}^t \sum_{i=1}^{n_\xi}\Big\{ | u_{t,j,t',i}^{\pi_1}({\xi}_{t',i}) - u_{t,j,t',i}^{\pi_1}(\hat{w}_{t',i}) | \\+ | u_{t,j,t',i}^{\pi_2}(\hat{w}_{t',i}) -u_{t,j,t',i}^{\pi_2}({\xi}_{t',i})|\Big\} \\
    \leq \sum_{t'=1}^t \sum_{i=1}^{n_\xi} 2 L_\pi |\hat{w}_{t',i}-{\xi}_{t',i}|\leq \sum_{t'=1}^t \sum_{i=1}^{n_\xi} 2 L_\pi \epsilon(\boldsymbol{w})\lesssim \mathcal{O}(\epsilon(\boldsymbol{w})),
\end{gather*}
where $\hat{w}_{t',i}$ is the breakpoint nearest to ${\xi}_{t',i}$, \emph{i.e.}, $\hat{w}_{t',i}=\argmin_{w\in \{w_{t',i,k-1},k\in [p_{t',i}+1]\}} | w- \xi_{t',i} | $.

To bound the the difference between $\boldsymbol{x}_{t}^{\pi_1}$ and $\boldsymbol{x}_{t}^{\pi_2}$, note that 
\begin{gather*}
     \| \boldsymbol{x}_{t}^{\pi_1}(\boldsymbol{\xi}) -\boldsymbol{x}_{t}^{\pi_2}(\boldsymbol{\xi}) \|_1=\|  \boldsymbol{A}_{t}(\boldsymbol{x}^{\pi_1}_{t-1}-\boldsymbol{x}^{\pi_2}_{t-1})+\boldsymbol{B}_{t}(\boldsymbol{u}^{\pi_1}_{t-1}-\boldsymbol{u}^{\pi_2}_{t-1}) \|_1
\end{gather*}
Therefore, by induction, the difference between $\boldsymbol{x}_{t}^{\pi_1}$ and $\boldsymbol{x}_{t}^{\pi_2}$ is also in the order of $\mathcal{O}(\epsilon(\boldsymbol{w}))$.

Since $c_t$ is Lipschitz continuous, we have 
\begin{gather*}
    | c_t(x_t^{\pi_1},u_t^{\pi_1}) -c_t(x_t^{\pi_2},u_t^{\pi_2}) | \leq L_c \left( \left\|  x_t^{\pi_1}-x_t^{\pi_2} \right\| + \left\|  u_t^{\pi_1}-u_t^{\pi_2} \right\| \right)\\
    =L_c \Bigg(\sum_{j=1}^{n_x}|x_{t,j}^{\pi_1}-x_{t,j}^{\pi_2}|+ \sum_{j=1}^{n_u}|u_{t,j}^{\pi_1}-u_{t,j}^{\pi_2}|\Bigg)\lesssim\mathcal{O}(\epsilon(\boldsymbol{w})).
\end{gather*}

By the above observations, (\ref{thm_min_ep}) can be further bounded by
\begin{gather}
    (\ref{thm_min_ep}) \lesssim \max_{\pi_1\in \mathfrak{N}^{u}_{L_\pi}\cap \mathfrak{C}}\ \max_{\mathbb{P}\in \mathscr{B}}\mathbb{E}_{\mathbb{P}} [\mathcal{O}(\epsilon(\boldsymbol{w}))] \lesssim \mathcal{O}(\epsilon(\boldsymbol{w})).
\end{gather}

To prove the tightness of this bound, we need to construct a concrete example whose non-asymptotic performance is lower bounded by $\Omega(\epsilon(\boldsymbol{w}))$. To this aim, we consider a single-horizon problem where the state $x$, continuous control $u$, and disturbance $\xi$ are of dimension one. Support $\varXi=[0,1]$ and the breakpoints $\boldsymbol{w}$ segment the support into equal lengths of $\epsilon$, so $\epsilon(\boldsymbol{w})=\epsilon$. The given control policy is set to $\pi_0^u(\xi)=0,\forall \xi\in \varXi$. The ambiguity set $\mathscr{B}$ contains only one distribution $\hat{P}=1/4 \delta_{0}+1/4 \delta_{\epsilon}+1/2 \delta_{\epsilon/2}$, \emph{i.e.}, $\mathscr{B}=\{ \hat{P} \}$, which corresponds to the Wasserstein ambiguity set centered at $ \hat{P}$ with radius $0$. The state transition function is given by $x=\xi$, and the cost function is given by
\begin{equation}
    c(x,u)=\frac{L_c}{L_{\pi}+1}|u-L_{\pi}|x-\epsilon/2||,
\end{equation}
whose Lipschitz constant is 
\begin{equation}
   \frac{L_c}{L_{\pi}+1} \max \{L_{\pi},1\},
\end{equation}
which is less than $L_c$.

The optimal control $u_{\text{opt}}$ of this problem is a $L_{\pi}$ Lipschitz piecewise linear one as follows.
\begin{gather}
    u_{\text{opt}}(\xi)=\left\{  \begin{aligned}
          L_{\pi}|\xi-\epsilon/2|, &\ \xi\in[0,\epsilon]\\
        L_{\pi}\epsilon/2\hspace{20pt},&\ \xi\in (\epsilon,1].
    \end{aligned}\right.
\end{gather}

Since $u_{\text{opt}}\in \mathfrak{N}^{u}_{L_\pi}$, we have $J^\ast (\mathfrak{N}^{u}_{L_\pi} )=0$.

For controller $u \in \Pi^u_{L_{\pi}}(\boldsymbol{w})$, only the first segment $[0,\epsilon]$ affects the performance, so we have
\begin{gather}
    J^\ast (\Pi^u_{L_\pi}(\boldsymbol{w}) )-J^\ast (\mathfrak{N}^{u}_{L_\pi} )\\
   =J^\ast (\Pi^u_{L_\pi}(\boldsymbol{w}) )=\max_{u(0),u(\epsilon)}\mathbb{E}_{\hat{P}}[c(x,u)]\label{appen_pi_1}\\
   \begin{gathered}
       =\frac{L_c}{L_{\pi}+1}\max_{u(0),u(\epsilon)} \Bigg\{ \frac{1}{4}\Big|u(0)-\frac{\epsilon L_{\pi}}{2}\Big|\\+\frac{1}{4}\Big|u(\epsilon)-\frac{\epsilon L_{\pi}}{2}\Big|+\frac{1}{2}\Big| \frac{u(0)+u(\epsilon)}{2}\Big| \Bigg\}
   \end{gathered}
    \\
    \begin{gathered}
        \geq \frac{L_c}{L_{\pi}+1} \max_{u(0),u(\epsilon)} \Bigg\{  \frac{1}{4}\Big| u(0)-\frac{\epsilon L_{\pi}}{2} +u(\epsilon)-\frac{\epsilon L_{\pi}}{2} \\-2   \frac{u(0)+u(\epsilon)}{2}\Big| \Bigg\}=\frac{L_c}{L_{\pi}+1}\frac{L_{\pi}}{4} \epsilon  \gtrsim \Omega(\epsilon),
    \end{gathered}
\end{gather}
where (\ref{appen_pi_1}) holds because $u$ is linear on $[0,\epsilon]$, so it is determined by the endpoint value $u(0)$ and $u(\epsilon)$.

Therefore, the $\mathcal{O}(\epsilon(\boldsymbol{w}))$ bound is tight.
\end{proof}

By Theorem \ref{Non_asymptotic}, the tightest non-asymptotic bound is of order $1/n$, which provides guidance in determining the number of breakpoints in practice.}

\section{Solution Methodology} 
\label{refor}

{ In this section, we develop solution methodology to compute $J^\ast ( \Pi^{{u}}(\boldsymbol{w}),\Pi^{{\gamma}}(\boldsymbol{w}))$ and the associated best lifted controller.

We make the following assumption on the cost function  $c_t(\boldsymbol{x}_t,\boldsymbol{u}_{t}^\pi,\boldsymbol{\gamma}_t^\pi)$, which allows us to leverage standard techniques to handle the distributionally robust objective ({\ref{cost}}) with Wasserstein-type ambiguity sets.}

{
\begin{asum}\label{piece_assum}
     The cost function  $c_t(\boldsymbol{x}_t,\boldsymbol{u}_{t}^\pi,\boldsymbol{\gamma}_t^\pi)$ is a convex piecewise linear cost as follows.
\begin{equation}
 { c_t(\boldsymbol{x}_t,\boldsymbol{u}_{t}^\pi,\boldsymbol{\gamma}_t^\pi)=\max_{\tau\in [I]} \bigg\{ \boldsymbol{a}_{\tau}^T\boldsymbol{x}_t+\boldsymbol{b}_{\tau}^T\boldsymbol{u}_t^\pi+\boldsymbol{c}_{\tau}^T\boldsymbol{\gamma}_t^\pi \bigg\}.}
\end{equation}
\end{asum}}

It is noteworthy that the convex piecewise linear function $c_t$ can approximate a quadratic cost, which is presumed in \cite{kim2023distributional}.

According to Definition {\ref{def_LCPS}}, LCP is affine in the lifted disturbance $G(\boldsymbol{\xi})$.
By substituting the LCP formulation (\ref{conti}) and (\ref{binar}) into the system dynamics (\ref{sys}), the system state $\boldsymbol{x}_t$ is also affine in the lifted disturbance $G(\boldsymbol{\xi})$. Therefore, constraint (\ref{hard}) can be written in the following concise form.
\begin{equation}
\label{comp1}
    { \boldsymbol{m}\geq \boldsymbol{E}^TG(\boldsymbol{\xi}),\forall \boldsymbol{\xi}\in \varXi,}
\end{equation}
{ where $\boldsymbol{m}$ and $\boldsymbol{E}$ are affine functions of the LCP coefficients $\boldsymbol{Y}_t,\boldsymbol{y}_t^0,\boldsymbol{Z}_t$, and $\boldsymbol{z}_t^0$.}

Similarly, the convex piecewise linear cost  $c_t(\boldsymbol{x}_t,\boldsymbol{u}_{t}^\pi,\boldsymbol{\gamma}_t^\pi) $ is convex piecewise linear in $G(\boldsymbol{\xi})$, and since the sum of convex piecewise linear functions is also convex piecewise linear, the total cost $\sum_{t=1}^{\mathcal{T}} \alpha^tc(\boldsymbol{x}_t,\boldsymbol{u}_{t},\boldsymbol{\gamma}_t)$ can be compactly formulated as
\begin{gather}
\label{eqqq1}
    \sum_{t=1}^{\mathcal{T}} \alpha^tc_t(\boldsymbol{x}_t,\boldsymbol{u}_{t}^\pi,\boldsymbol{\gamma}_t^\pi) =\max_{k\in [K]}\bigg\{{  \boldsymbol{d}_k^TG(\boldsymbol{\xi})}+r_k \bigg\},
\end{gather}
{ where $\boldsymbol{d}_k$ and $r_k$ are affine functions of the LCP coefficients $\boldsymbol{Y}_t,\boldsymbol{y}_t^0,\boldsymbol{Z}_t$, and $\boldsymbol{z}_t^0$.}

Therefore, computing $J^\ast ( \Pi^{{u}}(\boldsymbol{w}),\Pi^{{\gamma}}(\boldsymbol{w}))$ amounts to solving the following optimization problem.
\begin{gather}
    \min_{\boldsymbol{Y}_t,\boldsymbol{y}_t^0,\boldsymbol{Z}_t,\boldsymbol{z}_t^0}\hspace{8pt}\max_{\mathbb{P}\in \mathscr{B}}\mathbb{E}_{\boldsymbol{\xi}\sim\mathbb{P}}\bigg[ \max_{k\in [K]}\bigg\{ \boldsymbol{d}_k^TG(\boldsymbol{\xi})+r_k \bigg\} \bigg]\label{obj2}\\
    \text{s.t.}\hspace{30pt}\boldsymbol{m}\geq \boldsymbol{E}^TG(\boldsymbol{\xi}),\forall \boldsymbol{\xi}\in \varXi\hspace{40pt}\label{cons2}\\
    \boldsymbol{Y}_t\in \mathbb{R}^{n_{u}\times n_{V_t}},\boldsymbol{y}_t^0\in \mathbb{R}^{n_{u}},\boldsymbol{Z}_t\in \mathbb{Z}^{n_{\gamma}\times n_{Q_t}},\boldsymbol{z}_t^0\in \mathbb{Z}^{n_{\gamma}},\label{consss2}
\end{gather}
{ where $\boldsymbol{m}$, $\boldsymbol{E}$, $\boldsymbol{d}_k$, and $r_k$ are affine in $\boldsymbol{Y}_t,\boldsymbol{y}_t^0,\boldsymbol{Z}_t$, and $\boldsymbol{z}_t^0$.}

To solve this problem, we need to develop equivalent reformulations of the objective function (\ref{obj2}) and constraint (\ref{cons2}). 
{Concerning the reformulation of the objective function (\ref{obj2}), 
although the seminar work \cite{mohajerin2018data} and follow-up papers have established standard techniques to equivalently reformulate the min-max objective (\ref{obj2}) into a single-level minimization problem, this minimization problem is a middle form since it still involves the nonlinear lifting function $G$ which is intractable to optimization solvers. 
Therefore, to derive a tractable equivalent reformulation, we need to develop reformulation techniques regarding the lifting function to further reformulate this middle form.
Instead of developing such techniques,} the previous work \cite{feng2021multistagea} relaxed the Wasserstein ambiguity set to a lifted ambiguity set to avoid this difficulty, thereby leading to a sub-optimal LCP solution. 

{In Section \ref{equi_obj_sec}, by leveraging Proposition \ref{prop}, we develop such reformulation techniques regarding the lifting function in Lemma \ref{lemma}. Based on Lemma \ref{lemma}, we derive equivalent reformulations of the objective function (\ref{obj2}) for the Wasserstein ambiguity set and its variants. Subsequently, still based on Lemma \ref{lemma}, we reformulate constraint (\ref{cons2}) in Section \ref{equi_cons_sec}.}

\subsection{Equivalent Reformulation of Objective Function} 
\label{equi_obj_sec}

We first develop the following lemma as the cornerstone of the reformulation.
\begin{lem}
\label{lemma}
Let function { $f(\boldsymbol{\xi})$} be convex piecewise linear in $G(\boldsymbol{\xi})$, i.e.,
\begin{equation}\label{29_ori}
    { f(\boldsymbol{\xi})=\max_{m\in [M]} \bigg\{ \boldsymbol{g}_m^T G(\boldsymbol{\xi}) +h_m\bigg\}.}
\end{equation}
Then, constraint (\ref{maxi}) with respect to a given point $\widehat{\boldsymbol{\xi}}$
\begin{equation}
\label{maxi}
    \eta \geq \max_{\boldsymbol{\xi}\in \varXi}\bigg\{ f(\boldsymbol{\xi})-\lambda \left\| \boldsymbol{\xi}-\widehat{\boldsymbol{\xi}} \right\|\bigg\} 
\end{equation}
can be reformulated as: $\exists\ \eta^s_{m,t,i}\in\mathbb{R},-\lambda\leq \zeta_{k,t,i,j}\leq \lambda $ such that
    \begin{equation}
    \label{lemmaeq}
        \begin{gathered}
             \eta \geq \sum_{t=1}^{\mathcal{T}} \sum_{i=1}^{n_\xi} \eta_{m,t,i}+h_m\\
     \eta_{m,t,i}\geq \boldsymbol{g}_{m,t,i}^T\boldsymbol{\varphi}^-_{t,i,j}-{\zeta}_{t,i,j} \Big(\boldsymbol{R}_{t,i}\boldsymbol{\varphi}^-_{t,i,j}-\widehat{{\xi}}_{t,i}\Big)\\
     \eta_{m,t,i}\geq \boldsymbol{g}_{m,t,i}^T\boldsymbol{\varphi}_{t,i,j-1}-{\zeta}_{t,i,j} \Big(\boldsymbol{R}_{t,i}\boldsymbol{\varphi}_{t,i,j-1}-\widehat{{\xi}}_{t,i}\Big)\\
     \forall m\in [M], \forall t\in [\mathcal{T}] , \forall i\in [n_\xi] ,\forall j\in [p_{t,i}],
        \end{gathered}
    \end{equation}
    where $\boldsymbol{\varphi}_{t,i,j}=G_{t,i}(w_{t,i,j})$, $\boldsymbol{\varphi}^-_{t,i,j}=G^-_{t,i}(w_{t,i,j})$, and $\boldsymbol{g}_m=(\boldsymbol{g}^T_{m,1,1},\cdots,\boldsymbol{g}^T_{m,1,n_\xi},\boldsymbol{g}^T_{m,2,1},\cdots,\boldsymbol{g}^T_{m,t,n_\xi})^T$.
\end{lem}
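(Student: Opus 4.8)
The plan is to exploit three structural facts in turn: that the maximization over the linear pieces indexed by $m$ can be externalized, that the $1$-norm penalty together with the lifting map $G$ and the hyperrectangular support $\varXi$ all decompose coordinate-wise, and that on each lifted segment furnished by Proposition \ref{prop} the relevant quantities are affine in a scalar parameter. First I would split the inner maximization over $m$: since $f(\boldsymbol{\xi})-\lambda\|\boldsymbol{\xi}-\widehat{\boldsymbol{\xi}}\|=\max_{m\in[M]}\{\boldsymbol{g}_m^T G(\boldsymbol{\xi})+h_m-\lambda\|\boldsymbol{\xi}-\widehat{\boldsymbol{\xi}}\|\}$, constraint (\ref{maxi}) holds if and only if $\eta\geq h_m+\max_{\boldsymbol{\xi}\in\varXi}\{\boldsymbol{g}_m^T G(\boldsymbol{\xi})-\lambda\|\boldsymbol{\xi}-\widehat{\boldsymbol{\xi}}\|\}$ for every $m$.

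Next I would invoke separability. Because $\|\cdot\|$ is the $1$-norm, $G$ stacks the per-coordinate maps $G_{t,i}$, and $\varXi$ is a box by Assumption \ref{supp_assum}, the inner maximum splits into a sum of one-dimensional maxima $\sum_{t,i}\max_{\xi_{t,i}\in\varXi_{t,i}}\{\boldsymbol{g}_{m,t,i}^T G_{t,i}(\xi_{t,i})-\lambda|\xi_{t,i}-\widehat{\xi}_{t,i}|\}$. Introducing epigraph variables $\eta_{m,t,i}$ that upper-bound each scalar term immediately yields the first inequality $\eta\geq\sum_{t,i}\eta_{m,t,i}+h_m$ of (\ref{lemmaeq}).

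The heart of the argument is reformulating each scalar maximum. By Proposition \ref{prop}, $\varXi_{t,i}^{\ast}$ is the union over $j\in[p_{t,i}]$ of the segment $K_{t,i,j}^{\ast}$ with endpoints $\boldsymbol{\varphi}_{t,i,j-1}=G_{t,i}(w_{t,i,j-1})$ and $\boldsymbol{\varphi}^-_{t,i,j}=G^-_{t,i}(w_{t,i,j})$, so the maximum over $\xi_{t,i}$ equals the largest over $j$ of the maximum over that single segment. Parametrizing segment $j$ by $\theta\in[0,1]$, both $\boldsymbol{g}_{m,t,i}^T G_{t,i}$ and the recovered coordinate $\xi_{t,i}=\boldsymbol{R}_{t,i}G_{t,i}(\xi_{t,i})$ are affine in $\theta$, so the segment objective is concave in $\theta$. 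Writing $-\lambda|a|=\min_{|\zeta|\leq\lambda}\zeta a$ and performing a minimax exchange (the objective is affine in $\theta$ for fixed $\zeta$ and affine in $\zeta$ for fixed $\theta$, over the compact convex sets $[0,1]$ and $[-\lambda,\lambda]$), the segment maximum equals $\min_{|\zeta_{t,i,j}|\leq\lambda}\max\{\text{value at }\theta=1,\ \text{value at }\theta=0\}$. The two endpoint values are precisely $\boldsymbol{g}_{m,t,i}^T\boldsymbol{\varphi}_{t,i,j-1}-\zeta_{t,i,j}(\boldsymbol{R}_{t,i}\boldsymbol{\varphi}_{t,i,j-1}-\widehat{\xi}_{t,i})$ and $\boldsymbol{g}_{m,t,i}^T\boldsymbol{\varphi}^-_{t,i,j}-\zeta_{t,i,j}(\boldsymbol{R}_{t,i}\boldsymbol{\varphi}^-_{t,i,j}-\widehat{\xi}_{t,i})$, so requiring $\eta_{m,t,i}$ to dominate both endpoint values for some feasible $\zeta_{t,i,j}$ and for every $j$ reproduces the remaining inequalities of (\ref{lemmaeq}).

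I expect the main obstacle to be making the minimax exchange rigorous while simultaneously handling the half-open segments $K_{t,i,j}^{\ast}$ for $j<p_{t,i}$. The appearance of the left limit $G^-_{t,i}(w_{t,i,j})$ in place of $G_{t,i}(w_{t,i,j})$ is exactly what closes each half-open segment, and I would argue that, since the segment objective is continuous in $\theta$, its supremum over $[0,1)$ is attained in the limit at $\boldsymbol{\varphi}^-_{t,i,j}$; replacing the open endpoint by this closure value therefore preserves the maximum without enlarging the feasible set. The minimax step itself is standard for a bilinear saddle function over two compact intervals, but I would state it carefully so that the reformulation is shown to be exact in both directions rather than merely a conservative relaxation.
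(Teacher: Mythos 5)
Your proposal follows essentially the same route as the paper's own proof: externalize the maximum over $m$, use the $1$-norm, the coordinate-wise lifting, and the box support (Assumption \ref{supp_assum}) to decompose into scalar subproblems, then on each segment from Proposition \ref{prop} apply the dual-norm representation $-\lambda|a|=\min_{|\zeta|\le\lambda}(-\zeta a)$, exchange min and max via a bilinear minimax argument (the paper invokes Sion's theorem), and evaluate the resulting affine objective at the segment endpoints, with the left limits $G^-_{t,i}(w_{t,i,j})$ closing the half-open segments exactly as you describe. The argument is correct and matches the paper step for step.
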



\begin{proof}
Constraint (\ref{maxi}) can be reformulated as
\begin{gather}
 \eta \geq \max_{\boldsymbol{\xi}\in \varXi}\bigg\{ f(\boldsymbol{\xi})-\lambda \left\| \boldsymbol{\xi}-\widehat{\boldsymbol{\xi}} \right\|\bigg\} \\
\iff \eta \geq 
\max_{\boldsymbol{\xi}\in \varXi}\bigg\{
\boldsymbol{g}_m^T G(\boldsymbol{\xi}) +h_m-\lambda \left\| \boldsymbol{\xi}-\widehat{\boldsymbol{\xi}} \right\|\bigg\},\forall m\in [M]\\
\iff \eta\geq 
\max_{\boldsymbol{\xi}^{\ast}\in \varXi^{\ast}}\bigg\{\boldsymbol{g}_m^T\boldsymbol{\xi}^\ast +h_m-\lambda \left\|\boldsymbol{R}\boldsymbol{\xi}^{\ast}-\widehat{\boldsymbol{\xi}} \right\| \bigg\},\forall m\in [M]\label{worst}\\\iff \eta\geq 
    \sum_{t=1}^{\mathcal{T}} \sum_{i=1}^{n_\xi}\max_{\boldsymbol{\xi}_{t,i}^{\ast}\in \varXi_{t,i}^{\ast}}\bigg\{ \boldsymbol{g}_{m,t,i}^T\boldsymbol{\xi}_{t,i}^{\ast}-\lambda \left|\boldsymbol{R}_{t,i}\boldsymbol{\xi}_{t,i}^{\ast}-\widehat{{\xi}}_{t,i} \right| \bigg\}\nonumber\\+h_m
    =\sum_{t=1}^{\mathcal{T}} \sum_{i=1}^{n_\xi} \eta_{m,t,i}+h_m,\forall m \in [M],\label{formu1}
\end{gather}
where $\eta_{m,t,i}=\max_{\boldsymbol{\xi}_{t,i}^{\ast}\in \varXi_{t,i}^{\ast}}\{ \boldsymbol{g}_{m,t,i}^T\boldsymbol{\xi}_{t,i}^{\ast}-\lambda |\boldsymbol{R}_{t,i}\boldsymbol{\xi}_{t,i}^{\ast}-\widehat{{\xi}}_{t,i} | \}$, and equality (\ref{worst}) holds due to the recovery relation (\ref{recover}), { and equality ({\ref{formu1}}) holds due to the property of the 1-norm and Assumption {\ref{supp_assum}} that the support $\varXi$ is a hyperrectangle.}

By leveraging the analytical formulation of $\varXi_{t,i}^{\ast}$ given in Proposition \ref{prop}, $\eta_{m,t,i}^s$ can be further reformulated as follows.
\begin{gather}
\eta_{m,t,i}=\max_{\boldsymbol{\xi}_{t,i}^{\ast}\in \varXi_{t,i}^{\ast}}\bigg\{ \boldsymbol{g}_{m,t,i}^T\boldsymbol{\xi}_{t,i}^{\ast}-\lambda \left|\boldsymbol{R}_{t,i}\boldsymbol{\xi}_{t,i}^{\ast}-\widehat{{\xi}}_{t,i} \right| \bigg\}\\
    =\max_{j\in [p_{t,i}]} \max_{\boldsymbol{\xi}_{t,i}^{\ast}\in K_{t,i,j}^\ast}\bigg\{ \boldsymbol{g}_{m,t,i}^T\boldsymbol{\xi}_{t,i}^{\ast}-\lambda \left|\boldsymbol{R}_{t,i}\boldsymbol{\xi}_{t,i}^{\ast}-\widehat{{\xi}}_{t,i} \right| \bigg\}\\
    \begin{gathered}
    =\max_{j\in [p_{t,i}]} \max_{\boldsymbol{\xi}_{t,i}^{\ast}\in K_{t,i,j}^\ast}\min_{|{\zeta}_{m,t,i,j}|\leq \lambda}\\ \bigg\{ \boldsymbol{g}_{m,t,i}^T\boldsymbol{\xi}_{t,i}^{\ast}-{\zeta}_{m,t,i,j} \bigg(\boldsymbol{R}_{t,i}\boldsymbol{\xi}_{t,i}^{\ast}-\widehat{{\xi}}_{t,i}\bigg)  \bigg\}
    \end{gathered}\label{eq3}\\
    \begin{gathered}
    =\max_{ j\in [p_{t,i}]}\min_{|\zeta_{m,t,i,j}|\leq \lambda} \max_{\boldsymbol{\xi}_{t,i}^{\ast}\in K_{t,i,j}^\ast} \\\bigg\{ \boldsymbol{g}_{m,t,i}^T\boldsymbol{\xi}_{t,i}^{\ast}-{\zeta}_{m,t,i,j} \bigg(\boldsymbol{R}_{t,i}\boldsymbol{\xi}_{t,i}^{\ast}-\widehat{{\xi}}_{t,i}\bigg)  \bigg\}\end{gathered} \label{eq1}\\
    \begin{gathered}
            =\max_{j\in [p_{t,i}]}\min_{|{\zeta}_{m,t,i,j}|\leq \lambda}\max_{\boldsymbol{\xi}_{t,i}^{\ast}\in \big\{G( w_{t,i,(j-1)} ),G^{-}(  w_{t,i,j} )\big\}}\\
    \Bigg\{ \boldsymbol{g}_{m,t,i}^T\boldsymbol{\xi}_{t,i}^{\ast}-{\zeta}_{m,t,i,j} \bigg(\boldsymbol{R}_{t,i}\boldsymbol{\xi}_{t,i}^{\ast}-\widehat{{\xi}}_{t,i}\bigg)\Bigg\},
    \end{gathered}\label{eq2}
\end{gather}
where equality (\ref{eq3}) comes from the definition of the dual norm, equality (\ref{eq1}) holds due to Sion’s minimax theorem \cite{sion1958general}, and equality (\ref{eq2}) comes from the continuity of the inner linear function. 

Finally, by combining (\ref{formu1}) and (\ref{eq2}), we can reach the equivalent reformulation (\ref{lemmaeq}).
\end{proof} 

{By combining Lemma \ref{lemma} with standard Wasserstein DRO techniques \cite{mohajerin2018data,zhou2021distributionally}}, we derive an equivalent reformulation of the objective function (\ref{obj2}) for the following three types of Wasserstein metric-based ambiguity sets.

\subsubsection{Wasserstein Ambiguity Set}
Wasserstein ambiguity set $\mathscr{B}$ defined in (\ref{wass}) is 
a closed ball centered at the empirical distribution  $ \widehat{\mathbb{P}}=\sum_{s=1}^N \delta_{\widehat{\boldsymbol{\xi}}^s}/N$,
where $ \delta_{(\cdot)}$ is  the Dirac delta distribution\cite{mohajerin2018data}.
\begin{equation}
    \label{wass}
    \begin{gathered}
    \mathscr{B}=\left\{ \mathbb{Q}\in \mathscr{P}(\varXi)\left| d_W\left(\mathbb{Q},\widehat{\mathbb{P}} \right)\leq \theta \right.  \right\},\\
    d_W\left(\mathbb{Q},\widehat{\mathbb{P}} \right)=\inf_{\kappa\in \lambda\left(\mathbb{Q},\widehat{\mathbb{P}}  \right)}
    \int_{\varXi}\left\| \boldsymbol{\xi}_{\mathbb{Q}},\boldsymbol{\xi}_{\widehat{\mathbb{P}}} \right\|  \ d\kappa \left( \boldsymbol{\xi}_{\mathbb{Q}}, \boldsymbol{\xi}_{\widehat{\mathbb{P}}}\right),
\end{gathered}
\end{equation}
where $\mathscr{P}(\varXi)$ is the set of all Borel probability measures supported on $\varXi$, $d_W(\cdot,\cdot)$ is the Wasserstein metric, and $\lambda(\mathbb{Q},\widehat{\mathbb{P}} )$ contains all Borel probability measures supported on $\varXi\times \varXi$ with marginal distributions $\mathbb{Q}$ and $\widehat{\mathbb{P}}$.

We develop a tractable equivalent reformulation for (\ref{obj2}) with respect to the Wasserstein ambiguity set in Theorem \ref{thmw}.
\begin{thm}
    \label{thmw}
    Let $\mathscr{B}$ be the Wasserstein ambiguity set defined in (\ref{wass}). Then, the objective function (\ref{obj2}) admits the following equivalent reformulation.
    \begin{gather}
    \begin{gathered}
\min_{\lambda,\eta^s,\eta^s_{k,t,i},\zeta_{k,t,i,j}^s,\boldsymbol{Y}_t,\boldsymbol{y}_t^0,\boldsymbol{Z}_t,\boldsymbol{z}_t^0}\hspace{12pt} \lambda \theta +\frac{1}{N}\sum_{s=1}^N \eta^s\hspace{45pt}\\
         \emph{s.t.} \hspace{25pt}   \eta^s \geq \sum_{t=1}^{\mathcal{T}} \sum_{i=1}^{n_\xi} \eta_{k,t,i}^s+r_k\\
     \eta_{k,t,i}^s\geq \boldsymbol{d}_{k,t,i}^T\boldsymbol{\varphi}^-_{t,i,j}-\zeta_{k,t,i,j}^{s} \Big(\boldsymbol{R}_{t,i}\boldsymbol{\varphi}^-_{t,i,j}-\widehat{{\xi}}^s_{t,i}\Big)\\
     \eta_{k,t,i}^s\geq \boldsymbol{d}_{k,t,i}^T\boldsymbol{\varphi}_{t,i,j-1}-\zeta_{k,t,i,j}^{s} \Big(\boldsymbol{R}_{t,i}\boldsymbol{\varphi}_{t,i,j-1}-\widehat{{\xi}}^s_{t,i}\Big)\\
     |  \zeta_{k,t,i,j}^s | \leq\lambda  \\
     \forall s\in [N], \forall k\in [K], \forall t\in [\mathcal{T}] , \forall i\in [n_\xi] ,\forall j\in [p_{t,i}],\label{refwass}
     \end{gathered}
    \end{gather}
    where $\boldsymbol{\varphi}_{t,i,j}=G_{t,i}(w_{t,i,j})$, $\boldsymbol{\varphi}^-_{t,i,j}=G^-_{t,i}(w_{t,i,j})$, and $\boldsymbol{d}_k=(\boldsymbol{d}^T_{k,1,1},\cdots,\boldsymbol{d}^T_{k,1,n_\xi},\boldsymbol{d}^T_{k,2,1},\cdots,\boldsymbol{d}^T_{k,t,n_\xi})^T$.
\end{thm}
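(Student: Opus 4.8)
The plan is to combine the standard strong-duality theorem for Wasserstein distributionally robust optimization with Lemma \ref{lemma}, which already disposes of the nonlinear lifting $G$. Write the per-policy loss as $\ell(\boldsymbol{\xi}):=\max_{k\in[K]}\{\boldsymbol{d}_k^T G(\boldsymbol{\xi})+r_k\}$, so the inner worst-case expectation in (\ref{obj2}) is $\max_{\mathbb{P}\in\mathscr{B}}\mathbb{E}_{\boldsymbol{\xi}\sim\mathbb{P}}[\ell(\boldsymbol{\xi})]$. First I would invoke the strong duality for the Wasserstein ball (\ref{wass}) centered at the empirical distribution $\widehat{\mathbb{P}}=\frac1N\sum_{s=1}^N\delta_{\widehat{\boldsymbol{\xi}}^s}$, which gives
\begin{equation}
\max_{\mathbb{P}\in\mathscr{B}}\mathbb{E}_{\boldsymbol{\xi}\sim\mathbb{P}}[\ell(\boldsymbol{\xi})] = \inf_{\lambda\geq 0}\Bigg\{ \lambda\theta + \frac1N\sum_{s=1}^N \sup_{\boldsymbol{\xi}\in\varXi}\big[\ell(\boldsymbol{\xi}) - \lambda\|\boldsymbol{\xi}-\widehat{\boldsymbol{\xi}}^s\|\big]\Bigg\}.\nonumber
\end{equation}
Here $\lambda\geq 0$ is the multiplier of the radius constraint $d_W(\mathbb{P},\widehat{\mathbb{P}})\leq\theta$, and the norm is the $1$-norm fixed in the notation.

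Second, I would introduce an epigraph variable $\eta^s$ per sample and replace each supremum by the constraint $\eta^s\geq\sup_{\boldsymbol{\xi}\in\varXi}[\ell(\boldsymbol{\xi})-\lambda\|\boldsymbol{\xi}-\widehat{\boldsymbol{\xi}}^s\|]$, so the reformulated objective becomes $\min\{\lambda\theta+\frac1N\sum_s\eta^s\}$ over $\lambda$, the $\eta^s$, and the policy coefficients $\boldsymbol{Y}_t,\boldsymbol{y}_t^0,\boldsymbol{Z}_t,\boldsymbol{z}_t^0$. Each such constraint is precisely constraint (\ref{maxi}) of Lemma \ref{lemma} under the identifications $f\leftrightarrow\ell$, $M\leftrightarrow K$, $\boldsymbol{g}_m\leftrightarrow\boldsymbol{d}_k$, $h_m\leftrightarrow r_k$, and $\widehat{\boldsymbol{\xi}}\leftrightarrow\widehat{\boldsymbol{\xi}}^s$. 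Applying Lemma \ref{lemma} to each of the $N$ constraints then introduces the auxiliary variables $\eta^s_{k,t,i}$ and $\zeta^s_{k,t,i,j}$ with $|\zeta^s_{k,t,i,j}|\leq\lambda$ and yields exactly the linear inequalities over the breakpoint images $\boldsymbol{\varphi}_{t,i,j-1}$ and left limits $\boldsymbol{\varphi}^-_{t,i,j}$ appearing in (\ref{refwass}). Re-collecting the outer minimization over $\lambda$, the epigraph variables, and the policy coefficients gives the stated reformulation; since $\boldsymbol{d}_k$ and $r_k$ are affine in the policy coefficients, the constraints remain jointly representable and the reformulation is exact, with no relaxation of the ambiguity set.

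The main obstacle is conceptual rather than computational: the loss $\ell$ is neither concave nor even continuous in $\boldsymbol{\xi}$, since the indicator blocks $\boldsymbol{Q}_{t,i}$ of the lifting $G$ jump at the breakpoints, so the tractable inner-maximization reformulation of \cite{mohajerin2018data} does not apply verbatim. Two points then need care. First, the displayed strong duality must be justified for a merely upper-semicontinuous, bounded, Borel-measurable loss on the compact support $\varXi$; this follows from the standard Wasserstein strong-duality results \cite{mohajerin2018data,zhou2021distributionally} and relies only on $\ell$ being bounded on $\varXi$ (so the growth condition holds trivially) rather than on any concavity. Second, the inner supremum must be evaluated \emph{exactly} despite this discontinuity, which is precisely what Lemma \ref{lemma} achieves: it invokes the segment decomposition of $\varXi^{\ast}_{t,i}$ from Proposition \ref{prop}, separates the $1$-norm across dimensions under Assumption \ref{supp_assum}, and evaluates both the breakpoint image and its left limit so that the half-open segments $K_{t,i,j}$ contribute no gap. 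Once these two points are in place, the assembly described above is routine, so I expect the bulk of the effort to lie in cleanly stating the duality hypotheses rather than in the reformulation itself.
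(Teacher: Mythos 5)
Your proposal is correct and follows essentially the same route as the paper: invoke the standard Wasserstein strong-duality reformulation of the worst-case expectation into a per-sample supremum penalized by $\lambda\|\boldsymbol{\xi}-\widehat{\boldsymbol{\xi}}^s\|$, then apply Lemma \ref{lemma} to each resulting constraint to eliminate the lifting $G$. Your added care about the discontinuity of the loss and the hypotheses under which strong duality holds for a bounded Borel loss on the compact support is a point the paper passes over silently, but it does not change the argument.
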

\begin{proof}
     The worst-case expectation term in (\ref{obj2}) can be equivalently reformulated as follows \cite{mohajerin2018data}.
        \begin{gather}
        \max_{\mathbb{P}\in \mathscr{B}}\mathbb{E}_{\boldsymbol{\xi}\sim\mathbb{P}}\bigg[ \max_{k\in [K]}\bigg\{ \boldsymbol{d}_k^TG(\boldsymbol{\xi})+r_k \bigg\} \bigg]\\
          =\min_{\lambda \geq 0}\hspace{5pt} \lambda \theta + \frac{1}{N}\sum_{s=1}^N \eta^s,
              \text{ s.t. }\eta^s\geq \max_{\boldsymbol{\xi}\in \varXi} \bigg\{ \max_{k\in [K]}\big\{ \boldsymbol{d}_k^TG(\boldsymbol{\xi})+r_k\big\}\nonumber\\-\lambda \left\| \boldsymbol{\xi}-\widehat{\boldsymbol{\xi}}^s \right\|  \bigg\},\forall s\in [N].
     \label{gao1}
\end{gather}

Subsequently, by applying Lemma \ref{lemma} to the constraint of the (\ref{gao1}), the reformulation (\ref{refwass}) is derived.
\end{proof}

{
\begin{rem}\label{rem11}
    As mentioned in Section {\ref{intro}},\mbox{\cite{feng2021multistagea}} also investigated the Wasserstein DRO problem with the lifted decision rule. However, their reformulation method is actually based on a relaxation of the ambiguity set, while our method is an equivalent reformulation and has the same computational complexity as\mbox{\cite{feng2021multistagea}}. To see this point more clearly, in Appendix \ref{appendix_super}, we prove that our method is superior to the method in\mbox{\cite{feng2021multistagea}}.
\end{rem}}

\subsubsection{Mixed Wasserstein-Moment Ambiguity Set}

To enhance the conventional Wasserstein ambiguity set, constructing a mixed ambiguity set has been favored in recent years, by leveraging other information to remove unrealistic probability distributions in the Wasserstein ball \cite{zhou2021distributionally}. Therefore, we consider a mixed Wasserstein-moment ambiguity set as follows.
\begin{equation}
\label{wassm}
    \mathscr{B}=\left\{ \mathbb{Q}\in \mathscr{P}(\varXi)\left| \  \begin{gathered}
        d_W\left(\mathbb{Q},\widehat{\mathbb{P}} \right)\leq \theta \\
        \underline{\boldsymbol{\xi}} \leq \mathbb{E}_{\mathbb{Q}}[\boldsymbol{\xi}]\leq \overline{\boldsymbol{\xi}}
    \end{gathered} \right.  \right\},
\end{equation}
where $\underline{\boldsymbol{\xi}}$ and $\overline{\boldsymbol{\xi}}$ are the lower bound and upper bound of first-order moment, respectively.

The reformulation with respect to this mixed Wasserstein-moment ambiguity set is presented in Theorem \ref{thmwm}.
\begin{thm}
    \label{thmwm}
    Let $\mathscr{B}$ be the mixed Wasserstein-moment ambiguity set defined in (\ref{wassm}). Then, the objective function (39) admits the following equivalent reformulation.
    \begin{gather}
     \label{refwassm}
     \begin{gathered}
\min_{\lambda,\eta^s,\eta^s_{k,t,i},\underline{\boldsymbol{\beta}},\overline{\boldsymbol{\beta}},\zeta_{k,t,i,j}^s,\boldsymbol{Y}_t,\boldsymbol{y}_t^0,\boldsymbol{Z}_t,\boldsymbol{z}_t^0}\hspace{10pt}  \lambda \theta +\frac{1}{N}\sum_{s=1}^N \eta^s-\underline{\boldsymbol{\beta}}^T\underline{\boldsymbol{\xi}}+\overline{\boldsymbol{\beta}}^T\overline{\boldsymbol{\xi}}\hspace{28pt}\\
         \emph{s.t.} \hspace{25pt}   \eta^s \geq \sum_{t=1}^{\mathcal{T}} \sum_{i=1}^{n_\xi} \eta_{k,t,i}^s+r_k\hspace{35pt}\\
     \eta_{k,t,i}^s\geq \tilde{\boldsymbol{d}}_{k,t,i}^T\boldsymbol{\varphi}^-_{t,i,j}-\zeta_{k,t,i,j}^{s} \Big(\boldsymbol{R}_{t,i}\boldsymbol{\varphi}^-_{t,i,j}-\widehat{{\xi}}^s_{t,i}\Big)\\
     \eta_{k,t,i}^s\geq \tilde{\boldsymbol{d}}_{k,t,i}^T\boldsymbol{\varphi}_{t,i,j-1}-\zeta_{k,t,i,j}^{s} \Big(\boldsymbol{R}_{t,i}\boldsymbol{\varphi}_{t,i,j-1}-\widehat{{\xi}}^s_{t,i}\Big)\\
     |  \zeta_{k,t,i,j}^s | \leq\lambda,\boldsymbol{0}\leq\underline{\boldsymbol{\beta}}, \boldsymbol{0}\leq \overline{\boldsymbol{\beta}}\\
         \forall s\in [N], \forall k\in [K], \forall t\in [\mathcal{T}] , \forall i\in [n_\xi] ,\forall j\in [p_{t,i}],
          \end{gathered}
    \end{gather}
    where $\tilde{\boldsymbol{d}}_k=\boldsymbol{d}_k+\boldsymbol{R}^T\underline{\boldsymbol{\beta}}-\boldsymbol{R}^T\overline{\boldsymbol{\beta}}$, $\boldsymbol{\varphi}_{t,i,j}=G_{t,i}(w_{t,i,j})$, and $\boldsymbol{\varphi}^-_{t,i,j}=G^-_{t,i}(w_{t,i,j})$.
\end{thm}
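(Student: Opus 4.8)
The plan is to mirror the proof of Theorem \ref{thmw}, but to precede the Wasserstein dualization with an extra Lagrangian dualization of the two first-order moment constraints, and then to reduce everything to Lemma \ref{lemma}. First I would write the inner worst-case expectation in (\ref{obj2}), namely $\max_{\mathbb{P}\in\mathscr{B}}\mathbb{E}_{\mathbb{P}}[\max_{k\in[K]}\{\boldsymbol{d}_k^TG(\boldsymbol{\xi})+r_k\}]$, as a moment problem over $\mathbb{Q}\in\mathscr{P}(\varXi)$ subject to the Wasserstein ball constraint $d_W(\mathbb{Q},\widehat{\mathbb{P}})\leq\theta$ together with the two inequality constraints $\mathbb{E}_{\mathbb{Q}}[\boldsymbol{\xi}]\geq\underline{\boldsymbol{\xi}}$ and $\mathbb{E}_{\mathbb{Q}}[\boldsymbol{\xi}]\leq\overline{\boldsymbol{\xi}}$. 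Following the joint Wasserstein-moment duality of \cite{zhou2021distributionally}, I would dualize the two moment constraints with multipliers $\underline{\boldsymbol{\beta}}\geq\boldsymbol{0}$ and $\overline{\boldsymbol{\beta}}\geq\boldsymbol{0}$, which contributes the constant term $-\underline{\boldsymbol{\beta}}^T\underline{\boldsymbol{\xi}}+\overline{\boldsymbol{\beta}}^T\overline{\boldsymbol{\xi}}$ to the objective and modifies the integrand by the additive linear term $(\underline{\boldsymbol{\beta}}-\overline{\boldsymbol{\beta}})^T\boldsymbol{\xi}$, leaving a pure Wasserstein problem inside.

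The step that makes the lifting machinery apply cleanly is to rewrite this extra linear term through the recovery relation (\ref{recover}), i.e., $\boldsymbol{\xi}=\boldsymbol{R}G(\boldsymbol{\xi})$, so that $(\underline{\boldsymbol{\beta}}-\overline{\boldsymbol{\beta}})^T\boldsymbol{\xi}=(\boldsymbol{R}^T\underline{\boldsymbol{\beta}}-\boldsymbol{R}^T\overline{\boldsymbol{\beta}})^TG(\boldsymbol{\xi})$. Absorbing this coefficient into each linear piece restores the modified integrand to a convex piecewise linear function of $G(\boldsymbol{\xi})$, namely $\max_{k\in[K]}\{\tilde{\boldsymbol{d}}_k^TG(\boldsymbol{\xi})+r_k\}$ with $\tilde{\boldsymbol{d}}_k=\boldsymbol{d}_k+\boldsymbol{R}^T\underline{\boldsymbol{\beta}}-\boldsymbol{R}^T\overline{\boldsymbol{\beta}}$, exactly the quantity appearing in the theorem statement. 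Applying the standard Wasserstein strong-duality reformulation of \cite{mohajerin2018data} to the remaining inner problem then yields the objective $\lambda\theta+\tfrac{1}{N}\sum_{s=1}^N\eta^s$ together with the pointwise constraints $\eta^s\geq\max_{\boldsymbol{\xi}\in\varXi}\{\max_{k\in[K]}\{\tilde{\boldsymbol{d}}_k^TG(\boldsymbol{\xi})+r_k\}-\lambda\|\boldsymbol{\xi}-\widehat{\boldsymbol{\xi}}^s\|\}$ for each $s\in[N]$.

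Finally I would invoke Lemma \ref{lemma} on each of these $N$ constraints, identifying $f(\boldsymbol{\xi})=\max_{k\in[K]}\{\tilde{\boldsymbol{d}}_k^TG(\boldsymbol{\xi})+r_k\}$ and thus $\boldsymbol{g}_m\mapsto\tilde{\boldsymbol{d}}_k$. This replaces the intractable maximization over the nonlinear lifting $G$ by the finitely many linear inequalities evaluated at the segment endpoints $\boldsymbol{\varphi}_{t,i,j-1}$ and the left-limit points $\boldsymbol{\varphi}^-_{t,i,j}$, introducing the auxiliary variables $\eta^s_{k,t,i}$ and $\zeta^s_{k,t,i,j}$ with $|\zeta^s_{k,t,i,j}|\leq\lambda$. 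Collecting the minimization over the dual variables $(\lambda,\underline{\boldsymbol{\beta}},\overline{\boldsymbol{\beta}})$ with the outer minimization over the LCP coefficients $(\boldsymbol{Y}_t,\boldsymbol{y}_t^0,\boldsymbol{Z}_t,\boldsymbol{z}_t^0)$ then reproduces (\ref{refwassm}) verbatim.

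The main obstacle I anticipate is justifying strong duality in the moment-dualization step, i.e., certifying that interchanging the moment multipliers with the supremum over $\mathbb{Q}$ incurs no duality gap when the two moment constraints are present jointly with the Wasserstein ball. This requires a Slater-type interior condition on the mixed ambiguity set (\ref{wassm})---essentially that the empirical mean lies strictly inside $[\underline{\boldsymbol{\xi}},\overline{\boldsymbol{\xi}}]$ so the ambiguity set has nonempty relative interior---under which the result of \cite{zhou2021distributionally} applies; once this is in place, the absorption via $\boldsymbol{R}$ and the reduction to Lemma \ref{lemma} are routine and parallel the proof of Theorem \ref{thmw}.
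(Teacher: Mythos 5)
Your proposal follows essentially the same route as the paper: the paper invokes the joint Wasserstein--moment duality of \cite{zhou2021distributionally} to obtain exactly the intermediate form with modified coefficients $\tilde{\boldsymbol{d}}_k=\boldsymbol{d}_k+\boldsymbol{R}^T\underline{\boldsymbol{\beta}}-\boldsymbol{R}^T\overline{\boldsymbol{\beta}}$ (using the recovery relation (\ref{recover}) just as you do), and then applies Lemma \ref{lemma} to each of the $N$ pointwise constraints. Your explicit two-step Lagrangian dualization and your remark on the Slater-type condition merely unpack what the paper delegates to the cited reference, so the argument is correct and equivalent.
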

\begin{proof}
    { The reformulation of the worst-case expectation with respect to the mixed Wasserstein-moment ambiguity set has been developed by {\cite{zhou2021distributionally}} as follows.}
    \begin{gather}
        \max_{\mathbb{P}\in \mathscr{B}}\mathbb{E}_{\boldsymbol{\xi}\sim\mathbb{P}}\bigg[ \max_{k\in [K]}\bigg\{ \boldsymbol{d}_k^TG(\boldsymbol{\xi})+r_k \bigg\} \bigg]\\=\left\{ \begin{gathered} \min_{0\leq 
\lambda,\boldsymbol{0}\leq\underline{\boldsymbol{\beta}}, \boldsymbol{0}\leq \overline{\boldsymbol{\beta}}} \lambda \theta +\frac{1}{N}\sum_{s=1}^N \eta^s-\underline{\boldsymbol{\beta}}^T\underline{\boldsymbol{\xi}}+\overline{\boldsymbol{\beta}}^T\overline{\boldsymbol{\xi}}\hspace{28pt}\\
\text{s.t. }\hspace{0pt}
     \begin{gathered}\eta^s\geq \max_{\boldsymbol{\xi}\in \varXi} \bigg\{ \max_{k\in [K] }\big\{ \big(\boldsymbol{d}_k+\boldsymbol{R}^T\underline{\boldsymbol{\beta}}-\boldsymbol{R}^T\overline{\boldsymbol{\beta}}\big)^TG(\boldsymbol{\xi})\\+r_k\big\}-\lambda \left\| \boldsymbol{\xi}-\widehat{\boldsymbol{\xi}}^s \right\|  \bigg\},\forall s\in [N].\end{gathered} \end{gathered} \right.\label{wm1}
    \end{gather}

    By applying Lemma 1 to the constraint of (\ref{wm1}), we can reach to the reformulation (\ref{refwassm}).
\end{proof}

\subsubsection{Event-Wise Wasserstein Ambiguity Set}
In some cases, the disturbance distribution is affected by certainty events, such as the distribution of daily average temperature depending on the season. Therefore, constructing an event-wise ambiguity set can characterize such disturbance more precisely \cite{chen2020robust}.

We consider an event with $L$ possible realizations. Conditioning on the realization of a scenario $l\in[L]$, the disturbance has support $\varXi^l$ and $N_l$ historical data $\widehat{\boldsymbol{\xi}}^{l,s},s\in[N_l]$ is available. Based on this information, we construct the following event-wise Wasserstein ambiguity set.
\begin{equation}
\label{event}
    \mathscr{B}=\left\{ \mathbb{Q}\in \mathscr{P}\Big(\mathbb{R}^{\sum_{t=1}^{\mathcal{T}} n_\xi}\times [L]\Big)\left|  \begin{gathered}
        (\tilde{\boldsymbol{\xi}},\tilde{l})\sim \mathbb{Q}\\
        \mathbb{Q}(\tilde{l}=l)=p_l,\forall l\in [L]\\
        \mathbb{Q}\Big(\tilde{\boldsymbol{\xi}}\in \varXi^l|\tilde{l}=l\Big)=1\\ \hspace{70pt} ,\forall l\in [L]\\
        d_W\Big(\mathbb{Q}(\tilde{\boldsymbol{\xi}}|\tilde{l}=l),\widehat{\mathbb{P}}^l\Big)\leq \theta_l\\\hspace{70pt} ,\forall l\in [L]
    \end{gathered} \right.  \right\},
\end{equation}
where $\widehat{\mathbb{P}}^l=\sum_{s=1}^{N_l}
\delta_{\widehat{\boldsymbol{\xi}}^{l,s}}/N_l$ and 
$p_l$ is the probability of scenario $l$ with $\sum_{l=1}^Lp_l=1$.

To adapt to this event-wise disturbance distribution, we construct distinct LCPs for different scenarios. For scenario $l$, we leverage the lifting function $G^l$ with breakpoints $w_{t,i,j}^l,j\in [p_{t,i}^l-1]$ to construct the LCP  for continuous and integer controls with optimization variables $\boldsymbol{Y}_t^l,\boldsymbol{y}_t^{l,0},\boldsymbol{Z}_t^l,\boldsymbol{z}_t^{l,0}$.
Based on the event-wise LCP, the objective (39) is equivalent to
\begin{gather}
\min_{\boldsymbol{Y}_{t}^l,\boldsymbol{y}_t^{l,0},\boldsymbol{Z}_t^l,\boldsymbol{z}_t^{l,0}}\hspace{0pt}\max_{\mathbb{P}\in \mathscr{B}}\mathbb{E}_{(\tilde{\boldsymbol{\xi}},\tilde{l})\sim\mathbb{P}}\Bigg[ \sum_{l=1}^L \mathbbm{1}_{\tilde{l}=l}\max_{k\in [K]}\bigg\{ \Big(\boldsymbol{d}_k^{l}\Big)^ TG^l(\boldsymbol{\xi})+r_k^l \bigg\} \Bigg]\label{event1}\\
    \text{s.t.}\hspace{38pt}\boldsymbol{m}_l\geq \boldsymbol{E}_l^TG^l(\boldsymbol{\xi}),\forall \boldsymbol{\xi}\in \varXi^l\hspace{42pt}\label{event2}\\
    \boldsymbol{Y}_{t}^l\in \mathbb{R}^{n_{u}\times n_{V_t^l}},y_t^{l,0}\in \mathbb{R}^{n_{u}},\boldsymbol{Z}_t^l\in \mathbb{Z}^{n_{\gamma}\times n_{Q_t^l}},z_t^{l,0}\in \mathbb{Z}^{n_{\gamma}},
\end{gather}
where $\mathbbm{1}$ is the indicator function and $\boldsymbol{d}_k^{l},r_k^l,\boldsymbol{m}_l,\boldsymbol{E}_l$ can be computed according to (37)-(38).

The equivalent reformulation of the objective function (\ref{event1}) is developed in Theorem \ref{thm3}.
\begin{thm}
    \label{thm3}
        Let $\mathscr{B}$ be the event-wise Wasserstein ambiguity set defined in (\ref{event}). Then, the objective function (\ref{event1}) admits the following equivalent reformulation.
        \begin{gather}
\begin{gathered}
\min_{\lambda_l,\eta^{l,s},\eta^{l,s}_{k,t,i},\zeta_{k,t,i,j}^{l,s},\boldsymbol{Y}_{t}^l,\boldsymbol{y}_t^{l,0},\boldsymbol{Z}_t^l,\boldsymbol{z}_t^{l,0}}\hspace{10pt} \sum_{l=1}^Lp_l\Bigg[\lambda_l \theta_l +\frac{1}{N_l}\sum_{s=1}^{N_l} \eta^{l,s}\Bigg]\hspace{25pt}\\
        \emph{s.t.} \hspace{30pt}   \eta^{l,s} \geq \sum_{t=1}^{\mathcal{T}} \sum_{i=1}^{n_\xi} \eta_{k,t,i}^{l,s}+r_k^l\hspace{15pt}\\
     \eta_{k,t,i}^{l,s}\geq \boldsymbol{d}_{k,t,i}^{l\ T}\boldsymbol{\varphi}^{l-}_{t,i,j}-\zeta_{t,i,j}^{l,s} \Big(\boldsymbol{R}_{t,i}\boldsymbol{\varphi}^{l-}_{t,i,j}-\widehat{{\xi}}^{l,s}_{t,i}\Big)\\
     \eta_{k,t,i}^{l,s}\geq \boldsymbol{d}_{k,t,i}^{l\ T}\boldsymbol{\varphi}^l_{t,i,j-1}-\zeta_{t,i,j}^{l,s} \Big(\boldsymbol{R}_{t,i}\boldsymbol{\varphi}^l_{t,i,j-1}-\widehat{{\xi}}^{l,s}_{t,i}\Big)\\
     |  \zeta^{l,s}_{k,t,i,j} | \leq\lambda \\
    \forall l\in [L], \forall s\in [N_l], \forall k\in [K], \\\forall t\in [\mathcal{T}] , \forall i\in [n_\xi] ,\forall j\in [p_{t,i}^l],  \label{refevent}
    \end{gathered} 
        \end{gather}
        where $\boldsymbol{\varphi}^l_{t,i,j}=G^l_{t,i}(w_{t,i,j})$ and $\boldsymbol{\varphi}^{l-}_{t,i,j}=G^{l-}_{t,i}(w_{t,i,j})$.
\end{thm}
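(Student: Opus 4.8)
The plan is to exploit the scenario-separable structure of the event-wise ambiguity set (\ref{event}) so as to reduce the worst-case expectation to $L$ independent copies of the single-scenario Wasserstein problem already treated in Theorem \ref{thmw}, and then to invoke Lemma \ref{lemma} once per scenario. Concretely, I would first decompose the objective of (\ref{event1}) across the $L$ events, then reformulate each resulting inner maximization by strong duality, and finally push the lifting function through via Lemma \ref{lemma} to arrive at (\ref{refevent}).

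First I would establish the scenario decomposition. Using the law of total expectation, the objective of (\ref{event1}) equals $\sum_{l=1}^L p_l\, \mathbb{E}_{\boldsymbol{\xi}\sim \mathbb{Q}(\tilde{\boldsymbol{\xi}}|\tilde{l}=l)}[\max_{k\in[K]}\{(\boldsymbol{d}_k^l)^T G^l(\boldsymbol{\xi})+r_k^l\}]$, the indicator in the objective selecting, within each conditional expectation, the cost attached to scenario $l$. In the ambiguity set (\ref{event}) the event marginal $\mathbb{Q}(\tilde{l}=l)=p_l$ is fixed, while each conditional law $\mathbb{Q}^l:=\mathbb{Q}(\tilde{\boldsymbol{\xi}}\,|\,\tilde{l}=l)$ is constrained only through its own support $\varXi^l$ and its own Wasserstein ball $d_W(\mathbb{Q}^l,\widehat{\mathbb{P}}^l)\leq\theta_l$. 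Because the conditionals are mutually unconstrained and the objective is linear in $\mathbb{Q}$, the maximization over the joint law factorizes as
\begin{equation*}
\max_{\mathbb{P}\in\mathscr{B}}\mathbb{E}_{(\tilde{\boldsymbol{\xi}},\tilde{l})\sim\mathbb{P}}[\cdot]=\sum_{l=1}^L p_l \max_{\mathbb{Q}^l:\, d_W(\mathbb{Q}^l,\widehat{\mathbb{P}}^l)\leq \theta_l}\ \mathbb{E}_{\boldsymbol{\xi}\sim\mathbb{Q}^l}\Big[\max_{k\in[K]}\big\{(\boldsymbol{d}_k^l)^T G^l(\boldsymbol{\xi})+r_k^l\big\}\Big].
\end{equation*}

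Next I would reformulate each of the $L$ inner maximizations. Each one is exactly a single-scenario Wasserstein DRO problem of the type handled in Theorem \ref{thmw}, now with scenario-specific data $\widehat{\boldsymbol{\xi}}^{l,s}$, radius $\theta_l$, support $\varXi^l$, lifting $G^l$, and coefficients $(\boldsymbol{d}_k^l,r_k^l)$. Invoking the strong-duality reformulation of \cite{mohajerin2018data} converts the $l$-th term into $\min_{\lambda_l\geq 0}\lambda_l\theta_l+\frac{1}{N_l}\sum_{s=1}^{N_l}\eta^{l,s}$ subject to $\eta^{l,s}\geq \max_{\boldsymbol{\xi}\in\varXi^l}\{\max_{k\in[K]}\{(\boldsymbol{d}_k^l)^T G^l(\boldsymbol{\xi})+r_k^l\}-\lambda_l\|\boldsymbol{\xi}-\widehat{\boldsymbol{\xi}}^{l,s}\|\}$ for each $s\in[N_l]$. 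I would then apply Lemma \ref{lemma} to each such constraint, taking $f(\boldsymbol{\xi})=\max_{k\in[K]}\{(\boldsymbol{d}_k^l)^T G^l(\boldsymbol{\xi})+r_k^l\}$, $\widehat{\boldsymbol{\xi}}=\widehat{\boldsymbol{\xi}}^{l,s}$, and $\lambda=\lambda_l$, which yields the segment-endpoint constraints of (\ref{refevent}) with data $\boldsymbol{\varphi}^l_{t,i,j}$ and $\boldsymbol{\varphi}^{l-}_{t,i,j}$. Re-weighting by $p_l$, summing over $l$, and merging the outer minimization over the event-wise policy coefficients then recovers (\ref{refevent}) exactly.

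The main obstacle I anticipate lies in rigorously justifying the factorization of the joint maximization into the per-scenario maxima, since $\mathscr{B}$ is a set of \emph{joint} laws rather than a product of marginal Wasserstein balls. I would need to verify a two-way correspondence: that every feasible joint $\mathbb{Q}\in\mathscr{B}$ is fully described by the fixed weights $p_l$ together with conditionals $\mathbb{Q}^l$ lying in their respective balls, and conversely that any admissible collection of such conditionals reassembles into a feasible joint law. Care is also needed when some $p_l$ could vanish so that the conditional is undefined, though such scenarios contribute nothing to the objective. Once this correspondence is in place, the remaining steps are a mechanical reuse of Theorem \ref{thmw} and Lemma \ref{lemma}, merely indexed by the scenario $l$.
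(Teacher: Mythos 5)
Your proposal is correct and takes essentially the same route as the paper: decompose the worst-case expectation over the event-wise ambiguity set into $L$ independent single-scenario Wasserstein problems weighted by $p_l$, then apply Theorem \ref{thmw} (strong duality plus Lemma \ref{lemma}) to each inner term. The paper simply asserts this decomposition with a ``Notice that,'' so your extra care in justifying the two-way correspondence between joint laws in $\mathscr{B}$ and admissible collections of conditionals is a harmless refinement rather than a divergence.
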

\begin{proof}
    Notice that the worst-case expectation is equivalent to
    \begin{gather}
       \sum_{l=1}^L p_l  \max_{\mathbb{P}\in \mathscr{B}_l}\mathbb{E}_{\boldsymbol{\xi}\sim\mathbb{P}}\bigg[ \max_{k\in [K]}\bigg\{ \Big(\boldsymbol{d}_k^{l}\Big)^TG^l(\boldsymbol{\xi})+r_k^l \bigg\} \bigg],
    \end{gather}
    where 
\begin{equation}
        \mathscr{B}_l=\left\{ \mathbb{Q}\in \mathscr{P}(\varXi^l)\left| d_W\left(\mathbb{Q},\widehat{\mathbb{P}}^l \right)\leq \theta_l \right.  \right\}.
\end{equation}

Therefore, by applying Theorem 3 to the inner worst-case expectation term with respect to $\mathscr{B}_l$, the reformulation (\ref{refevent}) is derived.
\end{proof}

\subsection{Equivalent Reformulation of Constraint}
\label{equi_cons_sec}
{Notice that constraint (\ref{cons2}) is also in the form of Lemma \ref{lemma} with $\lambda=0$. Therefore, we have the following corollary to reformulate (\ref{cons2}).
\begin{cor}\label{coro_const}
    Constraint (\ref{cons2}) can be equivalently reformulated as: $\exists \boldsymbol{m}_{t,i}$ (decision vectors of compatible dimension) such that
    \begin{equation}
        \begin{gathered}
        \boldsymbol{m}\geq \sum_{t=1}^{\mathcal{T}} \sum_{i=1}^{n_\xi} \boldsymbol{m}_{t,i},\emph{and }\forall t\in [\mathcal{T}] , \forall i\in [n_\xi] ,\forall j\in [p_{t,i}],\\
        \boldsymbol{m}_{t,i}\geq \boldsymbol{E}_{t,i}G^-_{t,i}(w_{t,i,j}),
     \boldsymbol{m}_{t,i}\geq \boldsymbol{E}_{t,i}G_{t,i}(w_{t,i,j-1}),
    \end{gathered}
    \end{equation}
    where $\boldsymbol{E}=(\boldsymbol{E}_{1,1},\cdots,\boldsymbol{E}_{1,n_\xi},\boldsymbol{E}_{2,1},\cdots,\boldsymbol{E}_{\mathcal{T},n_\xi})$.
\end{cor}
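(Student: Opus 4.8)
The plan is to recognize constraint (\ref{cons2}) as the degenerate $\lambda=0$ instance of the robust constraint (\ref{maxi}) already handled by Lemma \ref{lemma}, and then simply read off the specialized reformulation. First I would unpack the vector inequality $\boldsymbol{m}\geq \boldsymbol{E}^TG(\boldsymbol{\xi})$, $\forall\boldsymbol{\xi}\in\varXi$, row by row: each scalar component of $\boldsymbol{m}$ must dominate a single linear functional of the lifted disturbance $G(\boldsymbol{\xi})$ uniformly over $\varXi$, which is exactly a robust constraint $\eta\geq\max_{\boldsymbol{\xi}\in\varXi}f(\boldsymbol{\xi})$ with the single-piece cost $f(\boldsymbol{\xi})=\boldsymbol{g}^TG(\boldsymbol{\xi})$, i.e. $M=1$ and $h=0$, and with no Wasserstein penalty term, i.e. $\lambda=0$.

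Next I would substitute $\lambda=0$ into the reformulation (\ref{lemmaeq}). The multiplier bounds $-\lambda\leq\zeta_{t,i,j}\leq\lambda$ collapse to $\zeta_{t,i,j}=0$, so every term $\zeta_{t,i,j}(\boldsymbol{R}_{t,i}\boldsymbol{\varphi}^-_{t,i,j}-\widehat{\xi}_{t,i})$ and $\zeta_{t,i,j}(\boldsymbol{R}_{t,i}\boldsymbol{\varphi}_{t,i,j-1}-\widehat{\xi}_{t,i})$ vanishes and the reference point $\widehat{\boldsymbol{\xi}}$ drops out entirely. What survives is precisely the separable system: an aggregation $\eta\geq\sum_{t,i}\eta_{t,i}$ together with the per-segment endpoint inequalities $\eta_{t,i}\geq\boldsymbol{g}_{t,i}^T\boldsymbol{\varphi}^-_{t,i,j}$ and $\eta_{t,i}\geq\boldsymbol{g}_{t,i}^T\boldsymbol{\varphi}_{t,i,j-1}$ for every $j\in[p_{t,i}]$, i.e. the statement that each linear piece is controlled at the two endpoints $G^-_{t,i}(w_{t,i,j})$ and $G_{t,i}(w_{t,i,j-1})$ of the segment $K^{\ast}_{t,i,j}$ furnished by Proposition \ref{prop}.

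Finally I would re-stack the per-component results into vector form, identifying the scalar auxiliaries $\eta_{t,i}$ with the entries of the vectors $\boldsymbol{m}_{t,i}$ and $\eta$ with the corresponding entry of $\boldsymbol{m}$, so that the coefficient block $\boldsymbol{E}_{t,i}$ of $\boldsymbol{E}$ associated with the $(t,i)$ sub-block of $G(\boldsymbol{\xi})$ plays the role of the row vectors $\boldsymbol{g}_{t,i}^T$. Performing this identification simultaneously across all rows reproduces verbatim the block inequalities claimed in the corollary. The only point that demands care is bookkeeping rather than analysis: since Lemma \ref{lemma} is stated for a scalar-valued function, I must apply it componentwise and check that the block partition of $\boldsymbol{E}$ is aligned with the partition of $G(\boldsymbol{\xi})$ into its $(t,i)$ sub-blocks. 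Once the dimensions are matched there is no genuine obstacle, because the equivalence is inherited directly from Lemma \ref{lemma} and this corollary is nothing more than its $\lambda=0$ specialization applied row-wise.
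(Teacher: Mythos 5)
Your proposal is correct and matches the paper's own (very brief) justification exactly: the paper simply observes that constraint (\ref{cons2}) is the $\lambda=0$ instance of the robust constraint handled by Lemma \ref{lemma}, so that the multipliers $\zeta$ vanish and the reference point $\widehat{\boldsymbol{\xi}}$ drops out, leaving only the endpoint inequalities over the segments from Proposition \ref{prop}. Your additional care about applying the scalar lemma row-wise and aligning the block partition of $\boldsymbol{E}$ with the $(t,i)$ sub-blocks of $G(\boldsymbol{\xi})$ is exactly the bookkeeping the paper leaves implicit.
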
}


\section{Numerical Experiment}
\label{case}

To test the performance of the proposed DR-LCP method, we conduct experiments on a classic single-item inventory control problem under the Wasserstein ambiguity set and compare proposed DR-LCP method with the ACP method \cite{van2015distributionally,goulart2006optimization} and the relaxed Wasserstein LCP (RW-LCP) method \cite{feng2021multistagea} in the literature.

The inventory control problem under the uncertainty of demand is presented in (\ref{inven}).
\begin{equation}
\label{inven}
\begin{gathered}
        \min_{{u}_{t}(\cdot),{\gamma}_{t,k}(\cdot)} \max_{\mathbb{P}\in \mathscr{B}}\
    \mathbb{E}_{\mathbb{P}} \left\{ 
    \begin{gathered}
            \sum_{t=1}^{\mathcal{T}}\bigg[ax_t(\boldsymbol{\xi}_{[t]})+ bu_{t-1}(\boldsymbol{\xi}_{[t-1]})\\+\sum_{k=1}^Kc_{k}q_k\gamma_{t,k}(\boldsymbol{\xi}_{[t]}) \bigg] 
    \end{gathered}\right\}\\
    \text{s.t.}\hspace{15pt} x_t(\boldsymbol{\xi}_{[t]})=x_{t-1}(\boldsymbol{\xi}_{[t-1]})+u_{t-1}(\boldsymbol{\xi}_{[t-1]})\\ \hspace{60pt}+\sum_{k=1}^Kq_k\gamma_{t,k}(\boldsymbol{\xi}_{[t]})-\boldsymbol{\xi}_t,\forall t\in [\mathcal{T}]\\
    \hspace{20pt} x_t(\boldsymbol{\xi}_{[t]})\geq 0,
    u_{t-1}(\boldsymbol{\xi}_{[t-1]})\geq 0,\gamma_{t,k}(\boldsymbol{\xi}_{[t]})\in \{0,1\},
    \\\hspace{30pt}\forall t\in [\mathcal{T}],\forall k\in [K],
\end{gathered}
\end{equation}
where $\mathscr{B}$ is the Wasserstein ambiguity set. $x_t$ is the state of the system and represents the inventory level at the end of time $t$, with the initial inventory level $x_0$. $u_{t-1}(\boldsymbol{\xi}_{[t-1]})$ and ${\gamma}_{t,k}(\boldsymbol{\xi}_{[t]})$ are continuous and binary controls, respectively. At time $t$, continuous ordering decision $u_{t-1}\geq 0 $ is made in advance before the realization of demand $\boldsymbol{\xi}_t$, incurring booking cost $bu_{t-1}$. After the demand $\boldsymbol{\xi}_t$ is realized, the binary decisions ${\gamma}_{t,k}$ for lot ordering with fixed order amount $q_k$ at unit price $c_k$ ($c_k>b$) are made to make up for the unsatisfied demand. Finally, the remaining inventory $x_t$ will be held at unit holding price $a$.

In constructing the LCP for $u_{t-1}$ and ${\gamma}_{t,k}$, we set the segment number $p_{t,i}$ to the same value $p$ for all time step $t$ and set the breakpoints to be $p$ equal division points of the support. {In the Wasserstein ambiguity set, $20$ samples are used to construct the empirical distribution, and the Wasserstein radius is set to the Wasserstein distance between the empirical distribution and the sample-generating distribution.}

All experiments are conducted on a laptop with Intel i5 CPU and 32G memory. The control problems are established by YALMIP in MATLAB R2022a and solved by GUROBI 9.5.2, {where the solution optimality gap is set to 0.1\%.}

We present the optimal objective value and computational time of different methods under different control horizon $T$ and segment number $p$ in Table \ref{objv}, where the computational time limit is set to 1000 seconds for each case. 
{For parameters in this experiment, we set the uncertainty support to $[20,100]^{\mathcal{T}}$, and take $K=3$, $a=5$, $b=2$, $c_k=5$, and $q_k=30,k\in [K]$. Uncertain demand at time $t$ is sampled independently from a Gaussian distribution $\mathcal{N}(\text{Mean}_t,1)$, where the mean of all 16 time stages are $\text{Mean}=(50,30,70,30,50,30,70,30,50,30,\\70,30,50,30,70,30)$.} From Table \ref{objv}, we observe 
that both LCP methods can derive an approximate solution in a reasonable computational time and the optimal objective value decreases as the number of segments grows. When segment number $p>1$, the proposed DR-LCP method always provides a lower objective value than the RW-LCP method, and this superiority in objective value grows as the control horizon $\mathcal{T}$ expands. More precisely, our method demonstrates a maximum improvement of 30.0\% in objective value compared to the RW-LCP method, indicating the effectiveness of the proposed DR-LCP, especially in a long control horizon.

\begin{table}[hbtp]
  \vspace{-0pt}
  \begin{center}
  \begin{threeparttable}
  \caption{Objective Values (\$) and Computational Times (s) of Different Methods}
  \label{objv}
\begin{tabular}{cccccc}
  \toprule
  \multirow{2}{*}[-1pt]{$\mathcal{T}$} &\multirow{2}{*}[-1pt]{Seg. No. $p$} &\multicolumn{2}{c}{Objective Value (\$)}&\multicolumn{2}{c}{Comput. Time (s)} \\
  \cmidrule(r){3-4}\cmidrule(r){5-6}\\
  \addlinespace[-10pt]
  &&RW-LCP&DR-LCP&RW-LCP&DR-LCP\\
  \midrule
\multirow{3}{*}[-5pt]{2} & 1  &	 903.6&   903.6& 0.04  &  0.04 \\
&2	&787.6&521.5&0.05&0.05 \\
&4	&536.9 &331.5 &0.11& 0.08\\
&8	&360.7&331.5&  0.35&0.30\\
\midrule
\multirow{3}{*}[-5pt]{4} & 1  &1609.5 & 1609.5 &0.04& 0.04 \\
&2	&1462.6 & 1142.8&0.07&0.06  \\
&4	&1079.2  & 755.7&0.32&0.14 \\
&8	&825.9& 755.7&2.37&0.99\\
\midrule
\multirow{3}{*}[-5pt]{8} & 1  &3081.2 & 3081.2 &  0.05&0.05\\
&2	&2898.0& 2327.5&0.16 &0.19  \\
&4	&2208.3 & 1758.8&1.71&1.07\\
&8	& 1775.4&1653.5&47.16&20.52\\
\midrule
\multirow{4}{*}[-5pt]{16} & 1  &6023.2 &6023.2 &0.15&0.13\\
&2	&5768.1& 4864.0  &1.32&0.95 \\
&4	&4808.0 & 4009.3 &22.86&37.59\\
&\multirow{2}{*}[0pt]{8} 	&3787.1 &3546.5  &\multirow{2}{*}[0pt]{1000.00}& \multirow{2}{*}[0pt]{1000.00}\\
& & (6.3\% gap) &   (7.2\% gap) &&\\
  \bottomrule
\end{tabular}
\begin{tablenotes}
      \small
      \item[\textdagger] The optimal upper bound of objective value is presented for cases that reach the computational time limit.
    \end{tablenotes}
\end{threeparttable}
\end{center}
\vspace{-10pt}
\end{table}

{To test the performance of the proposed DR-LCP method in problems with more integer controls, we vary the number of integer controls $K$ from $4$ to $32$. For parameters, we set $\mathcal{T}=4$, $p=4$, and $q_k=80/K, k\in [K]$, and other parameters are the same as in the first experiment. The results are presented in Table \ref{bin_table}, which shows that as more integer controls are available, the objective values of both the RW-LCP method and the proposed DR-LCP method decrease. 
It is also evident that in all cases, our DR-LCP method consistently outperforms the RW-LCP method in objective values while maintaining moderate computational time.}

\begin{table}[hbtp]
  \vspace{-0pt}
  \begin{center}
  \begin{threeparttable}
  \caption{Objective Values (\$) and Computational Times (s) of Different Number of Integer Controls}
  \label{bin_table}
  \begin{tabular}{ccccc}
  \toprule
  \multirow{2}{*}[-1pt]{$K$}  &\multicolumn{2}{c}{Objective Value (\$)}&\multicolumn{2}{c}{Comput. Time (s)} \\
  \cmidrule(r){2-3}\cmidrule(r){4-5}\\
  \addlinespace[-10pt]
  &RW-LCP&DR-LCP&RW-LCP&DR-LCP\\
  \midrule
4	&1011.0&694.4&0.20&0.11 \\
8	&999.6 &694.4 &0.23& 0.18\\
16	&989.1&689.3&  0.31&0.26\\
32	&989.1&  689.3&  0.53&0.48\\
  \bottomrule
\end{tabular}
\end{threeparttable}
\end{center}
\vspace{-10pt}
\end{table}


{We further implement the proposed method in the shrinking horizon fashion and test its closed-loop performance. Specifically, after the realization of the uncertain demand $\xi_t$, (\ref{inven}) is solved, where the immediate controls $u_t(\boldsymbol{\xi}_{[t]}),\gamma_{t,k}(\boldsymbol{\xi}_{[t]}),k\in [K]$ are optimized over $\mathbb{R}$ and $\{0,1\}$ (since $\boldsymbol{\xi}_{[t]}$ is realized) and controls at later time steps (\emph{i.e.}, $u_{t+1}(\boldsymbol{\xi}_{[t+1]}),\cdots,u_{T-1}(\boldsymbol{\xi}_{[T-1]})$ and $\gamma_{t+1,k}(\boldsymbol{\xi}_{[t+1]}),\cdots,\gamma_{T,k}(\boldsymbol{\xi}_{[T]})$) are optimized over the LCP. After solving (\ref{inven}), the immediate controls $u_t$ and $\gamma_{t,k}(\boldsymbol{\xi}_{[t]})$ are implemented, and then uncertainty $\xi_{t+1}$ is realized. The closed-loop experiment is conducted on the problem with control horizon $\mathcal{T}=4$ and all other parameters are the same as in the first experiment, and the results of 100 simulations are presented in Fig. \ref{Fig_close}. In Fig. \ref{Fig_close}, the proposed DR-LCP method outperforms the RW-LCP method by 15.7\% in the case with the segment number $p=8$, which demonstrates the superiority of the proposed method in closed-loop performance.}
\begin{figure}[!htp]  
\centering 
\includegraphics[width=0.48\textwidth]{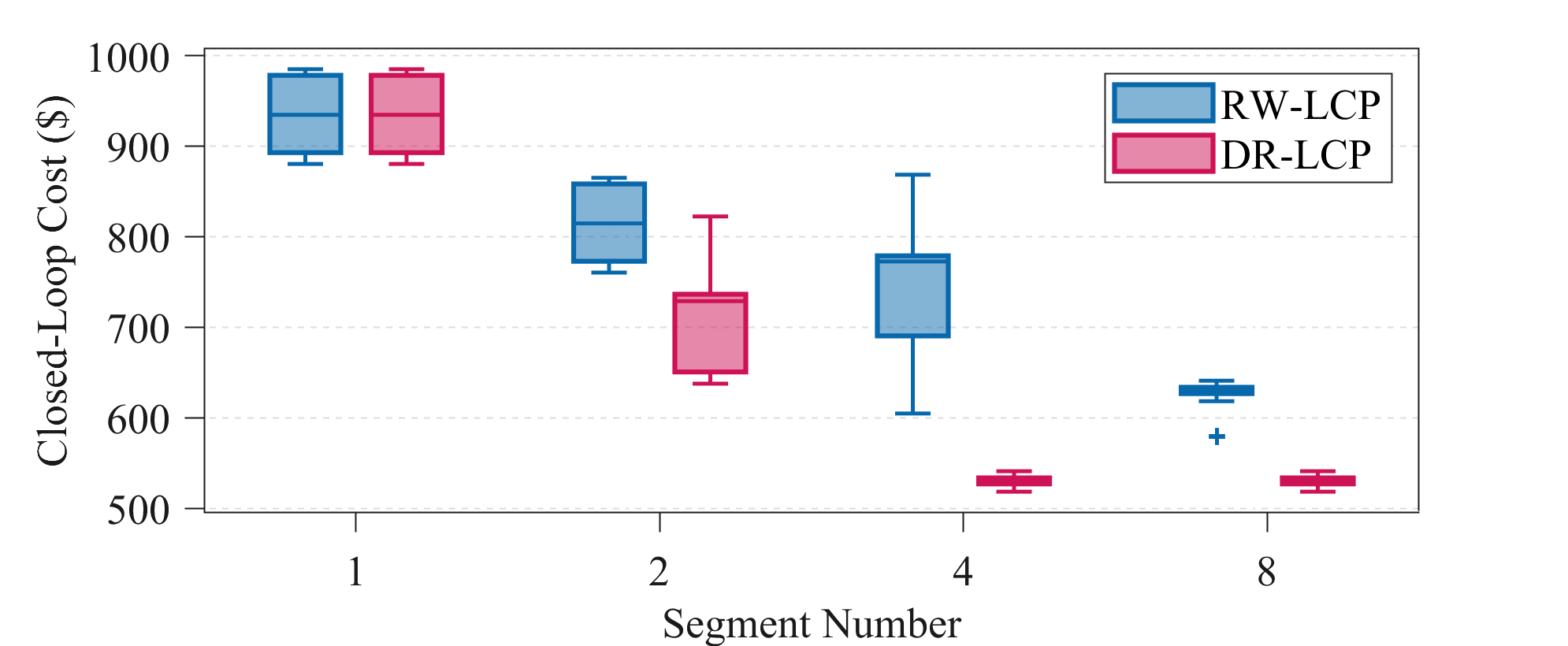} \vspace{-8pt}
{\caption{Closed-loop performance.}\label{Fig_close}
}
\vspace{-15pt}
\end{figure}

\section{Conclusion} 
\label{concl}
This work investigated the {finite-horizon} DRMIC of uncertain linear systems. We proposed a DR-LCP method to obtain an approximate solution to
the DRMIC problem for Wasserstein metric-based ambiguity sets by optimizing over the family of the LCPs. { We then conducted an asymptotic performance analysis and established a tight non-asymptotic performance bound of the proposed DR-LCP method.}
 Subsequently, we developed a solution methodology to find the optimal LCP for the DRMIC problem. Finally, numerical experiments on an inventory control problem demonstrated the effectiveness of the proposed DR-LCP method compared with existing methods in the literature.

{To alleviate the computational burden of long-horizon problems, distributed algorithms \cite{cherukuri2019cooperative} may be developed for our method to accelerate solving the reformulated optimization problem as future work.}
\vspace{-8pt}
{

\section*{Acknowledgement}

The authors would like to thank the editor and all the reviewers for their valuable and constructive comments, which help improve this paper.}

\vspace{-10pt}
 \appendix

\subsection{Superiority to\mbox{\cite{feng2021multistagea}}}
\label{appendix_super}
In \cite{feng2021multistagea}, the Wasserstein ambiguity set $\mathscr{B}$ (\ref{wass}) is relaxed to $\mathscr{B}_\text{Lifted}$ as follows.
\begin{gather}\allowdisplaybreaks\label{lifted_amb}
        \mathscr{B}_\text{Lifted}=\left\{ \mathbb{Q}\in \mathscr{P}(\text{conv}({\varXi^\ast}))\Big| d_W\Big(\mathbb{Q},\widehat{\mathbb{P}}_\text{Lifted}\Big)\leq \theta^{\ast} \right\},\\
    \theta^{\ast}=\sup_{\mathbb{P}\in \mathscr{B}} d_W(\mathbb{P}\circ G^{-1},\widehat{\mathbb{P}}_\text{Lifted}),\label{lifted_amb1}
\end{gather}
where $\widehat{\mathbb{P}}_\text{Lifted}=\sum_{s=1}^N\delta_{G(\widehat{\boldsymbol{\xi}}^s)}/N$ and $\mathbb{P}\circ G^{-1}$ is the push-forward probability measure by lifting function $G$.

By (\ref{lifted_amb}), the lifted ambiguity set $\mathscr{B}_\text{Lifted}$ `contains' the standard Wasserstein ambiguity set $\mathscr{B}$. Hence it can be viewed as a conservative approximation of $\mathscr{B}$.

Based on $\mathscr{B}_\text{Lifted}$, \cite{feng2021multistagea} resorts to optimizing the following objective.
\begin{equation}\label{relax_obj}
    \min_{\boldsymbol{Y}_t,\boldsymbol{y}_t^0,\boldsymbol{Z}_t,\boldsymbol{z}_t^0}\hspace{8pt}\max_{\mathbb{P}^\ast\in \mathscr{B}_\text{Lifted}}\mathbb{E}_{\boldsymbol{\xi}^\ast\sim\mathbb{P}^\ast}\bigg[ \max_{k\in [K]}\bigg\{ \boldsymbol{d}_k^T\boldsymbol{\xi}^\ast+r_k \bigg\} \bigg].
\end{equation}

We prove in Corollary \ref{coro1} that (\ref{relax_obj}) is a relaxation of (\ref{obj2}), which implies that by directly optimizing over (\ref{obj2}) our method is always better than the method in \cite{feng2021multistagea}.
\begin{cor}\label{coro1}
    For any $\boldsymbol{Y}_t,\boldsymbol{y}_t^0,\boldsymbol{Z}_t,$ and $\boldsymbol{z}_t^0$, it holds that 
    \begin{gather}
    \max_{\mathbb{P}^\ast\in \mathscr{B}_\text{Lifted}}\mathbb{E}_{\boldsymbol{\xi}^\ast\sim\mathbb{P}^\ast}\bigg[ \max_{k\in [K]}\bigg\{ \boldsymbol{d}_k^T\boldsymbol{\xi}^\ast+r_k \bigg\} \bigg]\label{worst_rela}\\
    \geq
        \max_{\mathbb{P}\in \mathscr{B}}\mathbb{E}_{\boldsymbol{\xi}\sim\mathbb{P}}\bigg[ \max_{k\in [K]}\bigg\{ \boldsymbol{d}_k^TG(\boldsymbol{\xi})+r_k \bigg\} \bigg].
    \end{gather}
\end{cor}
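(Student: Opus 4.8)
The plan is to establish the inequality by showing that the left-hand worst-case value dominates the right-hand one pointwise over $\mathscr{B}$: for every $\mathbb{P}\in\mathscr{B}$ I will exhibit a single feasible lifted distribution in $\mathscr{B}_\text{Lifted}$ that reproduces exactly the objective value attained by $\mathbb{P}$. The natural candidate is the push-forward measure $\mathbb{P}\circ G^{-1}$, which is precisely the law of the lifted disturbance $\boldsymbol{\xi}^\ast=G(\boldsymbol{\xi})$ when $\boldsymbol{\xi}\sim\mathbb{P}$.

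First I would fix an arbitrary $\mathbb{P}\in\mathscr{B}$ and verify that $\mathbb{P}\circ G^{-1}\in\mathscr{B}_\text{Lifted}$, which requires checking the two membership conditions in (\ref{lifted_amb}). The support condition holds because $\mathbb{P}$ is supported on $\varXi$, so $\mathbb{P}\circ G^{-1}$ is supported on $G(\varXi)=\varXi^\ast\subseteq\text{conv}(\varXi^\ast)$, placing it in $\mathscr{P}(\text{conv}(\varXi^\ast))$. The Wasserstein-radius condition follows immediately from the definition (\ref{lifted_amb1}) of $\theta^\ast$ as a supremum: since $\mathbb{P}\in\mathscr{B}$, we have $d_W(\mathbb{P}\circ G^{-1},\widehat{\mathbb{P}}_\text{Lifted})\leq\sup_{\mathbb{P}'\in\mathscr{B}}d_W(\mathbb{P}'\circ G^{-1},\widehat{\mathbb{P}}_\text{Lifted})=\theta^\ast$. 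Hence $\mathbb{P}\circ G^{-1}$ is a feasible point of the left-hand maximization.

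Next I would match the two objective values via the change-of-variables (push-forward) identity. For the measurable integrand $h(\boldsymbol{\xi}^\ast)=\max_{k\in[K]}\{\boldsymbol{d}_k^T\boldsymbol{\xi}^\ast+r_k\}$, the standard push-forward formula gives $\mathbb{E}_{\boldsymbol{\xi}^\ast\sim\mathbb{P}\circ G^{-1}}[h(\boldsymbol{\xi}^\ast)]=\mathbb{E}_{\boldsymbol{\xi}\sim\mathbb{P}}[h(G(\boldsymbol{\xi}))]=\mathbb{E}_{\boldsymbol{\xi}\sim\mathbb{P}}[\max_{k\in[K]}\{\boldsymbol{d}_k^TG(\boldsymbol{\xi})+r_k\}]$. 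Because $\mathbb{P}\circ G^{-1}$ is feasible for the left-hand problem, this common value is bounded above by $\max_{\mathbb{P}^\ast\in\mathscr{B}_\text{Lifted}}\mathbb{E}_{\boldsymbol{\xi}^\ast\sim\mathbb{P}^\ast}[h(\boldsymbol{\xi}^\ast)]$. Taking the supremum over all $\mathbb{P}\in\mathscr{B}$ then yields the claimed inequality.

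The only delicate point is the membership verification, and even that is engineered into the construction: the radius $\theta^\ast$ was defined in (\ref{lifted_amb1}) precisely as the tightest bound making every push-forward $\mathbb{P}\circ G^{-1}$ (for $\mathbb{P}\in\mathscr{B}$) admissible, so the hard work lies in the definition rather than the proof. The remaining care is purely measure-theoretic, namely confirming that $G$ is Borel measurable so that the push-forward is well defined and the change-of-variables formula applies; this is immediate since each lifting coordinate $V_{t,i,j}$ and $Q_{t,i,j}$ in Definition \ref{def1} is piecewise linear or a step function, hence Borel measurable.
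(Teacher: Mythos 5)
Your proposal is correct and follows essentially the same route as the paper: both pass to the push-forward measure $\mathbb{P}\circ G^{-1}$, use the change-of-variables identity to equate the two expectations, and bound by the worst case over $\mathscr{B}_\text{Lifted}$ before taking the supremum over $\mathbb{P}\in\mathscr{B}$. Your explicit verification that $\mathbb{P}\circ G^{-1}$ belongs to $\mathscr{B}_\text{Lifted}$ (support and radius conditions) is a detail the paper leaves implicit, but it is the same argument.
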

\begin{proof}
     $\forall \mathbb{P}\in \mathscr{B}$, we have
    \begin{gather}
        \mathbb{E}_{\boldsymbol{\xi}\sim\mathbb{P}}\bigg[ \max_{k\in [K]}\bigg\{ \boldsymbol{d}_k^TG(\boldsymbol{\xi})+r_k \bigg\} \bigg]=\mathbb{E}_{\boldsymbol{\xi}^\ast\sim\mathbb{P}_G}\bigg[ \max_{k\in [K]}\bigg\{ \boldsymbol{d}_k^T\boldsymbol{\xi}^\ast+r_k \bigg\} \bigg]\nonumber\\\leq  \max_{\mathbb{P}^\ast\in \mathscr{B}_\text{Lifted}}\mathbb{E}_{\boldsymbol{\xi}^\ast\sim\mathbb{P}^\ast}\bigg[ \max_{k\in [K]}\bigg\{ \boldsymbol{d}_k^T\boldsymbol{\xi}^\ast+r_k \bigg\} \bigg].\label{side_both}
    \end{gather}

    Therefore, Corollary \ref{coro1} holds by taking maximum on both sides of (\ref{side_both}).
\end{proof}

\FloatBarrier
\bibliographystyle{IEEEtran}
\bibliography{ref.bib}

\end{document}